\definecolor{cite}{rgb}{0.30,0.60,1.00}
\definecolor{url}{rgb}{1.00,0.10,0.80}
\definecolor{link}{rgb}{0.00,0.00,1.00}
\DeclareSymbolFont{cyrletters}{OT2}{wncyr}{m}{n}
\DeclareMathSymbol{\Sha}{\mathalpha}{cyrletters}{"58}
\DeclareFontFamily{U}{mathx}{\hyphenchar\font45}
\DeclareFontShape{U}{mathx}{m}{n}{
      <5> <6> <7> <8> <9> <10>
      <10.95> <12> <14.4> <17.28> <20.74> <24.88>
      mathx10
      }{}
\DeclareSymbolFont{mathx}{U}{mathx}{m}{n}
\DeclareMathAccent{\widecheck}{\mathalpha}{mathx}{"71}
\numberwithin{equation}{section}
\theoremstyle{plain}
\newtheorem*{theorem*}{Theorem}
\newtheorem{theorem}{Theorem}[section]
\newtheorem*{theorem A}{Xi's Theorem}
\newtheorem{lemma}[theorem]{Lemma}
\newtheorem{definition}[theorem]{Definition}
\newtheorem{proposition}[theorem]{Proposition}
\newtheorem{corollary}[theorem]{Corollary}
\newtheorem*{claim*}{Claim}
\theoremstyle{definition}
\newtheorem{remark}[theorem]{Remark}
\newtheorem*{rmk}{Remark}
\newcommand{\kl}{\mathrm{Kl}}
\DeclareMathOperator{\Mod}{mod}
\renewcommand{\bmod}[1]{\,(\Mod{ #1})}
\begin{document}

\title{Distinction between hyper-Kloosterman sums and multiplicative functions}
\author{Yang Zhang}

\address{School of Mathematics and Statistics, Xi'an Jiaotong University, Xi'an 710049, P. R. China}
\address{Morningside Center of Mathematics, Academy of Mathematics and Systems Science, Chinese Academy of Sciences, Beijing 100190, P.R. China}

\email{femistein.yang@gmail.com, 4120107016@stu.xjtu.edu.cn}

\subjclass[2020]{11L05, 11N64, 11N37, 11T23}

\keywords{hyper-Kloosterman sum, multiplicative functions, equidistribution, Birch sum, Sali\'e sum}

\begin{abstract}
Let $\kl_n(a,b;m)$ be the hyper-Kloosterman sum. Fix integers $n\geqslant2,a\neq0$, $b\neq0$ and $k\geqslant2$. For any $0\neq\eta\in\mathbb{C}$ and multiplicative function $f: \mathbb{N} \rightarrow \mathbb{C}$, we prove that  
$\kl_n(a,b;m)\neq\eta f(m)$ holds for $100\%$ square-free $k$-almost prime numbers $m$ and $100\%$ square-free numbers $m$. Counterintuitively, if $\kl_n(a,b;p)=\eta f(p)$ holds for all but finitely many primes $p$, we further show that
\begin{align*}
    \ab|\{m\leqslant X:\kl_n(a,b;m)=\eta f(m), m \text{ square-free }k\text{-almost prime}\}|= O(X^{1-\frac{1}{k+1}}).
\end{align*} 
These results overturn the general belief that $\kl_n(a,b;m)$ is nearly multiplicative in $m$, and that its distribution at almost prime moduli $m$ closely approximates that at primes.

Moreover, we prove that these results also hold for general algebraic exponential sums satisfying some natural conditions.

\end{abstract}

\dedicatory{{\it \small Dedicated to Professor Fei Xu}}

\maketitle


\section{Introduction}

The hyper-Kloosterman sum
\begin{align}\label{eq:Kloosterman}
\kl_n(a,b;m)\coloneqq\sum_{\substack{x_1,\dots,x_n \bmod{m}\\x_1\cdots x_n\equiv1\bmod {m}}}e\Big(\frac{a(x_1+\cdots +x_{n-1})+bx_n}{m}\Big),
\end{align}
defined for all $n\geqslant2,a,b\in\mathbb{Z}$ and $m\in\mathbb{N}$, satisfies the following twisted multiplicativity:
\begin{align*}
\kl_n(a,b;m_1m_2)=\kl_n(a\overline{m_2},b\overline{m_2};m_1)\kl_n(a\overline{m_1},b\overline{m_1};m_2)
\end{align*}
for any $m_1,m_2\in\mathbb{N}$ with $(m_1,m_2)=1$. Here, $m_2\overline{m_2}\equiv1\bmod{m_1}$ and $m_1\overline{m_1}\equiv1\bmod{m_2}$. 

Given integers $n\geqslant2,a,b$, then $\kl_n(a,b;m)$ is multiplicative in $m$ if and only if $ab=0$. In view of the resemblance between multiplicativity and twisted multiplicativity, even when $ab\neq0$, $\kl_n(a,b;m)$ is generally considered to behave like a multiplicative function in $m$. It is therefore well believed that studying the information of $\kl_n(a,b;m)$ at almost-primes $m$ provides a good approximation for that at primes. 

Denote by $\mathcal{P}$ the set of primes. Deligne \cite{Deligne77} proved the Weil bound:
\begin{align*}
\ab|\kl_n(a,b;p)|\leqslant np^{\frac{n-1}{2}},
\end{align*}
for any $a,b\in\mathbb{Z}$ and $p\in\mathcal{P}$ with $p\nmid a$. 

Restricting to the case $n=2$, one has $\kl_2(a,b;p)/\sqrt{p}\in[-2,2]$. Fixing $a\neq0$ and $b=1$, Katz \cite[Chapter1]{katz80} proposed the following three problems (also discussed in \cite{Xi20}), the last two problems here being equivalent transformations of Katz's original formulation.
\begin{enumerate}[label=(\Roman*)]
\item Does the density of $\{p\in\mathcal{P}:\kl_2(a,1;p)>0\}$ in $\mathcal{P}$ exist? If yes, is it equal to $\frac{1}{2}$?
\item Is there a measure on $[-2,2]$ such that $\{\frac{\kl_2(a,1;p)}{\sqrt{p}}:p\in\mathcal{P}\}$ has equidistribution?
\item Is there a Maass form $F$ of level $q$ with $q$ being a power of 2 such that
\begin{align*}
\lambda_F(p)=\frac{\kl_2(a,1;p)}{\sqrt{p}}
\end{align*}
holds for all but finitely many $p\in\mathcal{P}$? Here, $\lambda_F(p)$ is the $p$-th Fourier coefficient of $F$.
\end{enumerate}

To the best of the knowledge of the author, almost all research done on these problems to this day are concentrated on their corresponding almost-prime versions. 

For Problem I, Fouvry and Michel \cite{Fouvry-Michel07} proved that there are at least $\gg$ $X/\log X$ square-free numbers $m$ with at most $23$ prime factors in $[X,2X]$ such that $\kl_2(a,1;m)>0$. Subsequently, the constant $23$ was sharpened in \cite{Sivak-Fischler09, Matomaki11, Xi15, Xi18} and \cite{Drappeau-Maynar19}. 

For Problem II, Michel \cite{Michel95} proved that there is a positive proportion of pairs of primes $p,q\leqslant X$ such that $\ab|\frac{\kl_2(a,1;pq)}{4\sqrt{pq}}|\geqslant0.16$, and for any $\epsilon>0$, a positive proportion such that $\ab|\frac{\kl_2(a,1;pq)}{4\sqrt{pq}}|\leqslant\epsilon$. 

Denote by $\mu$ the M\"obius function, and $\omega(m)$ counts the number of distinct prime factors of $m$. 
For an integer $k\geqslant1$, the set of square-free $k$-almost primes defined by
\begin{align}
\Pi_k\coloneqq\{m\in\mathbb{N}:\mu^2(m)=1,~\omega(m)=k\},
\end{align}
and for all $X\geqslant1$, denote
\begin{align}
\Pi_k(X)\coloneqq\Pi_k\cap[1,X].
\end{align}
In order to give a negative answer to Problem III, Xi \cite{Xi20} proved the following theorem (main results of \cite[Theorem~1.1, Theorem~1.2]{Xi20}).
\begin{theorem A}
For any $0\neq\eta\in\mathbb{R}$ and each primitive Hecke-Maass cusp form $F$ of trivial nebentypus, there exists $r=r(\eta)\geqslant2$ such that 
\[
    \ab|\{m\in\bigcup_{k\leqslant r}\Pi_k(X):\kl_2(1,1;m)\neq\eta \sqrt{m}\lambda_F(m)\}|\gg\frac{X}{\log X}
\] 
holds for large $X$. In particular, one may take $r(\pm1)=100$.
Here, $\lambda_F(n)$ is the $n$-th Fourier coefficient of $F$ and equals the eigenvalue of the $n$-th Hecke operator.
\end{theorem A}
Note that the $\sqrt{m}\lambda_F(m)$ in Xi's Theorem is a multiplicative function of $m\in\mathbb{N}$. Considering the general belief that $\kl_2(a,1;m)$ are close to being multiplicative for $m$, it was then asserted in \cite{Xi20} that Xi's Theorem provides a partial negative answer to Problem III.

All these results on the above Katz's problems lead us to investigate and clarify the relationship between hyper-Kloosterman sums and multiplicative functions. Surprisingly, we find that $\kl_n(a,b;m)$ with fixed $n\geqslant2,a\neq0,b\neq0$, differs fundamentally and significantly from any complex valued multiplicative function of $m$, in a sense to be made more clear by the following three main theorems of this paper.

\begin{theorem}\label{thm:distinctness}
Fix integers $n\geqslant2,a\neq0,b\neq0$ and $k\geqslant2$. For any $0\neq\eta\in\mathbb{C}$ and multiplicative function $f: \mathbb{N} \rightarrow \mathbb{C}$, we have
\begin{align}\label{k-almost}
     \ab|\{m\in\Pi_k(X):\kl_n(a,b;m)=\eta f(m)\}|\leqslant \pi\ab(\frac{X}{L_k})+5\ab(\ab|ab|+\sqrt{nk})X^{1-\frac{1}{nk}}
\end{align}
for all $X\geqslant1$ and
\begin{align}\label{k-almost1}
     \ab|\{m\in\Pi_k(X):\kl_n(a,b;m)\neq\eta f(m)\}|= \ab|\Pi_k(X)|+O\ab(\pi\ab(\frac{X}{L_k}))
\end{align}
as $X\rightarrow +\infty$.

In particular, \eqref{k-almost1} implies that $\kl_n(a,b;m)\neq\eta f(m)$ holds for $100\%$ square-free $k$-almost primes $m$.
Here, $L_k$ denotes the product of the first $(k-1)$ primes, $\pi(\cdot)$ is the prime counting function.
\end{theorem}

\begin{rmk}

\hspace*{\fill}
\begin{enumerate}[leftmargin=*,align=left]
\item The inequality \eqref{k-almost} in the above theorem is sharp, so the error term of the asymptotic formula \eqref{k-almost1} is optimal. Indeed, let $f$ be a multiplicative function satisfying $f(p)=1$ for all $p\mid L_k$ and $f(p)=\eta^{-1}\kl_n(a,b;pL_k)$ for all $p$ with $(p,L_k)=1$.
Then we know that
\begin{align*}
\kl_n(a,b;pL_k)=\eta f(pL_k)
\end{align*}
for all primes $p$ coprime to $L_k$. Hence,
\begin{align*}
\ab|\{m\in\Pi_k(X):\kl_n(a,b;m)=\eta f(m)\}|\geqslant \pi\ab(\frac{X}{L_k})-k+1.
\end{align*}

\item Since the subset of those $m\in \Pi_k$ with $\kl_n(a,b;m)\neq\eta f(m)$ has density one in $\Pi_k$, our result demonstrates in particular that the distribution of such natural numbers $m\in\Pi_k$ is independent of that of the set of primes $p$ with $\kl_n(a,b;p)\neq\eta f(p)$ for all $k\geqslant2$.

 \item Applying Theorem~\ref{thm:distinctness} to the multiplicative function $\sqrt{m}\lambda_F(m)$, we know that the set $\{m\in\Pi_k:\kl_2(1,1;m)\neq\pm\sqrt{m}\lambda_F(m)\}$ actually always has density one in $\Pi_k$ for all $k\geqslant2$, a fact which is immune to the change of the set $\{p\in\mathcal{P}:\kl_2(1,1;p)\neq\pm\sqrt{p}\lambda_F(p)\}$ as the Hecke-Maass form $F$ varies. In view of this result, it now seems far-fetched to claim that Xi's Theorem is logically related to Problem III at all.
\end{enumerate}

\end{rmk}

\begin{theorem}\label{thm:square-free}
Fix integers $n\geqslant2,a\neq0,b\neq0$, then for any complex numbers $\eta\neq0$ and complex valued multiplicative function $f$, we have
\[
    \ab|\{m\leqslant X:\kl_n(a,b;m)=\eta f(m),\mu^{2}(m)=1\}|\leqslant \pi\ab(X)+\ab(4\ab|ab|n^2+\beta)Xe^{-\frac{2\sqrt{\log X}}{n}}
\] 
for all $X\geqslant1$, and
\[
    \ab|\{m\leqslant X:\kl_n(a,b;m)\neq\eta f(m),\mu^{2}(m)=1\}|=\frac{X}{\zeta\ab(2)}+O\ab(\frac{X}{\log X})
\] 
as $X\rightarrow +\infty$.

In particular, this asymptotic formula implies that $\kl_n(a,b;m)\neq\eta f(m)$ holds for $100\%$ square-free numbers $m$. 
Here, $\beta$ is an absolute constant and $\zeta(2)=\sum_{m=1}^{\infty}\frac{1}{m^2}$.
\end{theorem}

\begin{rmk}
The inequality in the above theorem is sharp. Let $f$ be a multiplicative function with $f(p)=\eta^{-1}\kl_n(a,b;p)$ for all primes $p$.
It is then easy to see that
\[
    \ab|\{m\leqslant X:\kl_n(a,b;m)=\eta f(m),\mu^{2}(m)=1\}|\geqslant \pi\ab(X).
\] 
\end{rmk}

One sees immediately that all the results in Theorem \ref{thm:distinctness} and \ref{thm:square-free} are uniform in $\eta$ and $f$. The main reason is that our proofs are intrinsic for $\eta$ and $f$ since we only use non-vanishing of $\eta$ and multiplicativity of $f$. 

Counterintuitively, when $\kl_n(a,b;p)=\eta f(p)$ holds for all but finitely many primes $p$, for $k\geqslant2$, the following theorem shows that the number of square-free $k$-almost primes $m$ with $\kl_n(a,b;m)=\eta f(m)$ is significantly smaller than the upper bound $\pi(\frac{X}{L_k})$ in Theorem \ref{thm:distinctness}.
\begin{theorem}\label{thm:repal}
Let $\eta\neq0$ be a complex number.
Fix integers $n\geqslant2,$ $a\neq0,b\neq0$ and $k\geqslant2$. Let $f: \mathbb{N} \rightarrow \mathbb{C}$ be a multiplicative function satisfying 
$\kl_n(a,b;p)=\eta f(p)$ for all but finitely many prime $p$. Then
\begin{align*}
\ab|\{m\in\Pi_k(X):\kl_n(a,b;m)=\eta f(m)\}|\leqslant\ab(\ab|ab|+nk)X^{1-\frac{1}{k+1}}+\binom{p_f}{k}
\end{align*}
holds for all integers $k\geqslant2$ and all $X\geqslant1$. 

Here, $p_f$ is the maximal prime $p$ with $\kl_n(a,b;p)\neq\eta f(p)$, $\binom{p_f}{k}$ is the $k$-th binomial coefficient.
\end{theorem}

\begin{remark}
From the above three theorems (with $n\geqslant2,a\neq0,b\neq0$ fixed), we see that the twisted multiplicativity of $\kl_n(a,b;m)$ in $m$, 
despite closely resembling multiplicativity, is in reality fundamentally different from it.
Therefore, the strategy of approximating the distribution of $\kl_n(a,b;m)$ at prime moduli $m$ by studying its distribution at almost prime moduli is infeasible.    
\end{remark}

\hspace*{\fill}

\subsection*{Outline of the proof}
Our method differs from the arguments with sieve methods, $\ell$-adic cohomology and spectral theory of automorphic forms in \cite{Fouvry-Micheal03,Fouvry-Michel07,Xi15,Xi18,Xi20}. The proof here is built on algebraic properties of hyper-Kloosterman sums and elementary extremal combinatorics. In particular, our main tools are the following two lemmas, which as far as we know is totally new for studying hyper-Kloosterman sums.

\begin{lemma}[Lemma~\ref{lm:irational}]\label{lm:1}
Let $p$ be a prime, $a,b,t$ three integers with $(abt,p)=1$. For any integer $n\geqslant2$, if
\begin{align*}
\frac{\kl_n(a,b;p)}{\kl_n(at,bt;p)}\in\mathbb{Q}
\end{align*}
then $t^n\equiv1\bmod p$.
\end{lemma}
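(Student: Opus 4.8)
The plan is to embed everything in the ring $\Z[\zeta_p]$, where $S(a,b;p)=\sum_{x=1}^{p-1}\zeta_p^{\,ax+b\overline x}$ naturally lives, and to recover $t$ modulo $p$ from the image of $S(a,b;p)$ modulo small powers of $\lambda\coloneqq 1-\zeta_p$. The relevant facts about $\lambda$ are standard: it generates the unique prime of $\Z[\zeta_p]$ above $p$, $(\lambda)^{p-1}=(p)$, $\zeta_p\equiv 1\pmod\lambda$, $\Z[\zeta_p]/(\lambda)\cong\F_p$, and — crucially — the valuation $v_\lambda$ restricts on $\Q$ to $(p-1)\,v_p$. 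I will use two inputs. The first is the tautology $S(at,bt;p)=\sigma_t\bigl(S(a,b;p)\bigr)$, where $\sigma_t\colon\zeta_p\mapsto\zeta_p^{\,t}$ is the corresponding element of $\Gal(\Q(\zeta_p)/\Q)$. The second is the congruence
\[
S(a,b;p)\equiv -1-ab\,\lambda^{2}\pmod{\lambda^{3}}\qquad\bigl(p\geqslant 5,\ (ab,p)=1\bigr),
\]
whose coarser form $S(a,b;p)\equiv -1\pmod{\lambda^{2}}$ holds for every odd $p$. To prove it I will write $S(a,b;p)=\sum_{x=1}^{p-1}(1-\lambda)^{n_x}$ with $n_x\in\{0,\dots,p-1\}$ the residue of $ax+b\overline x$ modulo $p$, expand each summand by the binomial theorem modulo $\lambda^{3}$, and reduce the two resulting power sums modulo $p$: the identities $\sum_{x=1}^{p-1}x\equiv\sum_{x=1}^{p-1}\overline x\equiv 0\pmod p$ give $\sum_x n_x\equiv 0\pmod p$, which together with $(\lambda)^{p-1}=(p)$ kills the $\lambda^{1}$-contribution, while $\sum_{x=1}^{p-1}x^{2}\equiv\sum_{x=1}^{p-1}\overline x^{2}\equiv 0\pmod p$ (valid precisely for $p\geqslant5$) together with $\sum_{x=1}^{p-1}x\overline x\equiv -1\pmod p$ give $\sum_x\binom{n_x}{2}\equiv -ab\pmod p$, which is the asserted $\lambda^{2}$-coefficient. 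Since this forces $v_\lambda\bigl(S(a,b;p)\bigr)=0$, the Kloosterman sum is nonzero and the ratio in the statement is well defined.

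The ``if'' implication is immediate: when $t\equiv\pm1\pmod p$ we have $S(at,bt;p)=S(\pm a,\pm b;p)$, which equals $S(a,b;p)$ for the sign $+$ and, for the sign $-$, equals $\overline{S(a,b;p)}=S(a,b;p)$ since the Kloosterman sum is real; so the ratio is $1\in\Q$. This also disposes of $p=3$, where $t\equiv\pm1\pmod p$ holds automatically, so I may henceforth assume $p\geqslant5$.

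For the converse, suppose $q\coloneqq S(a,b;p)/S(at,bt;p)\in\Q$. Both $S(a,b;p)$ and $S(at,bt;p)$ have $v_\lambda=0$, hence so does $q$, and since $q\in\Q$ this means $v_p(q)=0$. Reducing $S(a,b;p)=q\,S(at,bt;p)$ modulo $\lambda$ and using $S\equiv-1\pmod\lambda$ gives $-1\equiv -q\pmod\lambda$, hence $q\equiv 1\pmod p$; the rigidity of $v_\lambda$ on $\Q$ upgrades this to $v_\lambda(q-1)=(p-1)v_p(q-1)\geqslant p-1\geqslant4$, i.e.\ $q\equiv 1\pmod{\lambda^{3}}$. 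Reducing the same identity modulo $\lambda^{3}$ and using the congruence above on both sides (note $(at)(bt)=abt^{2}$) gives
\[
-1-ab\,\lambda^{2}\equiv -1-abt^{2}\lambda^{2}\pmod{\lambda^{3}};
\]
cancelling $-1$ and dividing by $\lambda^{2}$ yields $ab(t^{2}-1)\equiv 0$ in $\F_p$, so $t^{2}\equiv1\pmod p$ as $(ab,p)=1$, and therefore $t\equiv\pm1\pmod p$ because $p$ is prime.

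I expect the main obstacle to be the congruence itself; once it is in hand the deduction is formal, the only subtle link being the rigidity step — a rational number $\equiv 1\pmod p$ is $\equiv 1\pmod{\lambda^{p-1}}$ — which is precisely what lets the comparison modulo $\lambda^{3}$ detect the difference between $ab$ and $abt^{2}$ and nothing else. The computation behind the congruence is elementary but needs care at two points: replacing the integer $ax+b\overline x$ by its residue $n_x$ inside $\binom{\cdot}{2}$ is legitimate only because the map $y\mapsto\binom{y}{2}$ descends to $\Z/p\to\Z/p$ for $p$ odd, and the rational-integer coefficients $\sum_x n_x$ and $\sum_x\binom{n_x}{2}$ (the first divisible by $p$) must be followed through the identification $(p)=(\lambda)^{p-1}$ in order to read off their $\lambda$-adic valuations correctly.
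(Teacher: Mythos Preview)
Your proof is correct and takes a genuinely different route from the paper's. The paper first reduces to the one-variable form $\kl(a,p)=r\,\kl(ab^2,p)$ and applies a Galois-averaging trick: summing both sides over $\Gal(\Q(\zeta_p)/\Q)$ and using the coarse congruence $\kl\equiv-1\pmod\lambda$ forces $r=1$; it then invokes a separate distinctness result, $\kl(a,p)\ne\kl(b,p)$ for $a\ne b$ in $\F_p^\times$, proved by a Jacobi-symbol computation. You bypass both steps by sharpening the congruence to $S(a,b;p)\equiv -1-ab\,\lambda^2\pmod{\lambda^3}$, so that the product $ab\bmod p$ is read off directly from the $\lambda$-adic expansion; the rigidity $v_\lambda|_{\Q}=(p-1)v_p$ then upgrades $q\equiv1\pmod\lambda$ to $q\equiv1\pmod{\lambda^{p-1}}$ and the comparison at level $\lambda^2$ finishes. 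Your argument is more self-contained (no auxiliary distinctness lemma) and makes transparent exactly which residue data the Kloosterman sum remembers at each $\lambda$-adic level; the paper's route, on the other hand, produces the distinctness lemma as a byproduct of independent interest. One trivial remark: you dispose of $p=3$ but not $p=2$ before assuming $p\geqslant5$; the same reasoning (any $t$ with $(t,p)=1$ is $\equiv\pm1$ for $p\leqslant3$) covers both.
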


\begin{lemma}[Lemma~\ref{lm:intersection}]\label{lm:2}
Let $S$ be a finite set with $N$ elements, $S_{1},...,S_{m}$ be $m$ subsets of $S$ satisfying the condition that the intersection of any $t$ of them contains at most 1 element, i.e., for any $t$ distinct integers $i_{1},\dots,i_{t}\in \{1,\dots,m\}$, we have
$\ab|S_{i_{1}}\cap\cdots\cap S_{i_{t}}|\leqslant 1$. Then
\begin{align*}
\mathop{\sum}_{i=1}^m\ab|S_{i}|\leqslant m+N\sqrt{m(t-1)}.
\end{align*}
\end{lemma}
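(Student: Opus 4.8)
The plan is to translate the $t$-fold intersection hypothesis into a bound on how many of the sets $S_i$ can contain a fixed \emph{pair} of elements of $S$, and then to combine a double-counting estimate with the Cauchy--Schwarz inequality. First I would record the equivalent reformulation of the hypothesis: for any two distinct $x,y\in S$, the number of indices $i$ with $\{x,y\}\subseteq S_i$ is at most $t-1$. Indeed, if $x\neq y$ lay in $t$ common sets $S_{i_1},\dots,S_{i_t}$, then $\{x,y\}\subseteq S_{i_1}\cap\cdots\cap S_{i_t}$, contradicting the hypothesis; conversely, if the intersection of some $t$ of the sets contained two distinct elements, those elements would lie in each of those $t$ sets.

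Next I would double count. The sum $\sum_{i=1}^{m}\binom{\ab|S_i|}{2}$ equals the number of triples $(i,\{x,y\})$ with $x\neq y$ and $x,y\in S_i$; summing instead over the unordered pair $\{x,y\}$ first and invoking the reformulation above bounds it by $(t-1)\binom{N}{2}\leqslant\tfrac12(t-1)N^2$. Expanding $\binom{\ab|S_i|}{2}=\tfrac12\ab(\ab|S_i|^{2}-\ab|S_i|)$ turns this into
\[
\sum_{i=1}^{m}\ab|S_i|^{2}\leqslant\sum_{i=1}^{m}\ab|S_i|+(t-1)N^{2}.
\]
Writing $A=\sum_{i=1}^{m}\ab|S_i|$ and applying Cauchy--Schwarz in the form $A^{2}\leqslant m\sum_{i=1}^{m}\ab|S_i|^{2}$, I get the quadratic inequality $A^{2}\leqslant mA+m(t-1)N^{2}$. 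Solving it for $A$ and using the subadditivity bound $\sqrt{m^{2}+4m(t-1)N^{2}}\leqslant m+2N\sqrt{m(t-1)}$ gives $A\leqslant m+N\sqrt{m(t-1)}$, which is exactly the claim.

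The whole argument is a routine application of double counting and Cauchy--Schwarz, in the spirit of classical extremal set theory (Fisher-type inequalities and the K\H{o}v\'ari--S\'os--Tur\'an theorem), so I do not anticipate a genuine obstacle. The only points deserving mild care are the equivalence asserted in the first step and the degenerate regime $t>m$, in which the hypothesis is vacuous and the claimed inequality holds trivially since then $A\leqslant mN\leqslant m+N\sqrt{m(t-1)}$.
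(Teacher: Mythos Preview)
Your proof is correct and follows essentially the same route as the paper: reformulate the hypothesis as ``every pair $\{x,y\}$ lies in at most $t-1$ of the $S_i$'', double count to get $\sum_i\binom{|S_i|}{2}\leqslant (t-1)\binom{N}{2}$, apply Cauchy--Schwarz to obtain a quadratic inequality in $A=\sum_i|S_i|$, and solve it. The only cosmetic difference is that the paper keeps $N(N-1)$ rather than $N^2$ in the intermediate step, which makes no difference to the final bound.
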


 The basic strategies for proving all our theorems are similar. For the sake of discussion, we only give a sketch of the proof of Theorem \ref{thm:distinctness}. 

Given any integers $n\geqslant2,a\neq0,b\neq0$, complex number $\eta\neq0$ and complex valued multiplicative function $f$, denote
\begin{align}\label{definition0}
R_k(X)=\{m\in\Pi_k(X): \kl_n(a,b;m)=\eta f(m)\}
\end{align}
for all integers $k\geqslant2$ and real numbers $X\geqslant1$ (this is the special case of \eqref{de2} with $\eta_i=\eta$ for all $i$).
Partitioning the integers in $R_k(x)$ according to their largest prime factor, we have
\begin{align}\label{decomposition}
\ab|R_k(X)|=\sum_{p\in (1,\frac{X}{L_k}]}\ab|R^p_k(X)|,
\end{align}
where
\[
R^p_k(X)=\ab\{\frac{m}{p}: m\in R_k(X),P^+(m)=p\}
\]
for all primes $p$, and $P^+(m)$ denotes the largest prime factor of $m$. 

We will then estimate $\ab|R_k(X)|$ by dividing the summation interval in \eqref{decomposition} into four parts and bound them separately. As we mentioned above, the main tools we use are the algebraic properties of Klooserman sums and elementary combinatorics.

Using multiplicativity of $f$, twisted multiplicativity of $\kl_n(a,b;m)$ and Lemma \ref{lm:1}, for $X>(\ab|ab|)^{k+1}$, we prove the following three intersection properties (Lemma~\ref{lm:R^p-intersection}):
\begin{enumerate}[leftmargin=*,align=left]
\item $\ab|R^p_k(X)\cap R^q_k(X)|\leqslant1$ for any two different primes $p,q\in(X^{\frac{n}{n+1}},\frac{X}{L_k}]$;   
\item $\ab|\bigcap_{i=1}^{nk-n}R_k^{p_i}(X)|\leqslant1$ for any $(nk-n)$ different primes $p_1,\dots,p_{nk-n}\in(X^{\frac{1}{k}},X^{\frac{n}{n+1}}]$;
\item $\ab|\bigcap_{i=1}^{nk}R_k^{p_i}(X)|\leqslant1$ for any $nk$ different primes $p_1,\dots,p_{nk}\in(X^{\frac{1}{k+1}},X^{\frac{1}{k}}]$.
\end{enumerate}

For all prime  $p\in(X_1,X_2]\subseteq(1,X]$, by the definition of $R_k^p(X)$ we know that
\begin{align}\label{subseteq0}
R_k^p(X)\subseteq \{m\in\Pi_{k-1}\ab(\frac{X}{X_1}): P^+(m)\leqslant X_2\}
\end{align}
and denote~\eqref{de0} 
\[
\Pi_{k-1}\ab(\frac{X}{X_1},X_2)= \{m\in\Pi_{k-1}\ab(\frac{X}{X_1}): P^+(m)\leqslant X_2\}.
\]

Since we have intersection conditions (2) and (3),  by~\eqref{subseteq0} and Lemma~\ref{lm:2} we get the following two inequalities(Lemmas~\ref{lm:Sigma2},\ref{lm:Sigma3}):
\begin{align}\label{ineq1}
\sum_{p\in (X^{\frac{1}{k+1}},X^{\frac{1}{k}}]}\ab|R^p_k(X)| 
\leqslant c_1X^{1-\frac{1}{2k}}
\end{align}
and
\begin{align}\label{ineq2}
\sum_{p\in (X^{\frac{1}{k}},X^{\frac{n}{n+1}}]}\ab|R^p_k(X)|\leqslant c_2 X^{1-\frac{1}{nk}}
\end{align}
for all $X>(\ab|ab|)^{k+1}$. Here, $c_1,c_2$ are two constants determined by $n$ and $k$.

It is also easy to see that
\begin{align}\label{ineq3}
\sum_{p\in (1,X^{\frac{1}{k+1}}]}\ab|R^p_k(X)|\leqslant \ab|\{m\leqslant X:\omega(m)=k,P^+(m)\leqslant X^{\frac{1}{k+1}}\}|
\leqslant X^{\frac{k}{k+1}}.
\end{align}
The above three inequalities~\eqref{ineq1},~\eqref{ineq2} and~\eqref{ineq3} give the error term of the upper bound of $|R_k^p(X)|$. And the main term is given by the following inequality:
\begin{align*}
\sum_{p\in (X^{\frac{n}{n+1}},\frac{X}{L_k}]}\ab|R^p_k(X)|&=\mathop{\sum}_{\substack{p\in (X^{\frac{n}{n+1}},\frac{X}{L_k}]\\|R^p_{k}(X)|\leqslant1}}\ab|R^p_{k}(X)| + \mathop{\sum}_{\substack{p\in (X^{\frac{n}{n+1}},\frac{X}{L_k}]\\|R^p_{k}(X)|\geqslant2}}\ab|R^p_{k}(X)|\\
&\leqslant \pi\ab(\frac{X}{L_k})+ \mathop{\sum}_{\substack{p\in (X^{\frac{n}{n+1}},\frac{X}{L_k}]\\|R^p_{k}(X)|\geqslant2}}\ab|R^p_{k}(X)|
\end{align*}
From~\eqref{subseteq0} we know that $R_k^p(X)\subseteq\Pi_{k-1}(X^{\frac{1}{n+1}},\frac{X}{L_k})$ for all prime 
$p\in (X^{\frac{n}{n+1}},\frac{X}{L_k}]$. For $X>(\ab|ab|)^{k+1}$ by intersection condition (1) and the pigeonhole principle, 
we have (Lemma~\ref{lm:Sigma4})
\begin{align*}
\ab|\{p\in (X^{\frac{n}{n+1}},\frac{X}{L_k}]:|R^p_{k}(X)|\geqslant2\}|\leqslant\binom{\ab|\Pi_{k-1}(X^{\frac{1}{n+1}},\frac{X}{L_k})|}{2}
\leqslant X^{\frac{2}{n+1}}.
\end{align*}
Applying Lemma~\eqref{lm:2}, we derive (Lemma~\ref{lm:Sigma4})
\begin{align*}
 \mathop{\sum}_{\substack{p\in (X^{\frac{n}{n+1}},\frac{X}{L_k}]\\|R^p_{k}(X)|\geqslant2}}\ab|R^p_{k}(X)|&\leqslant X^{\frac{2}{n+1}}+ \ab|\Pi_{k-1}(X^{\frac{1}{n+1}},\frac{X}{L_k})|\sqrt{X^{\frac{2}{n+1}}(2-1)}\\
 &\leqslant2X^{\frac{2}{n+1}} 
\end{align*}
for $X>(\ab|ab|)^{k+1}$. 

Hence, by~\eqref{definition0} and~\eqref{decomposition} we conclude that
\[
\{m\in\Pi_k(X): \kl_n(a,b;m)=\eta f(m)\}\leqslant\pi\ab(\frac{X}{L_k})+O\ab(X^{1-\frac{1}{nk}})
\]
for $X\rightarrow+\infty$.

Besides the hyper-Kloosterman sum, our method is also able to treat general algebraic exponential sums~
(Theorem~\ref{thm:exp sum}, Remark~\ref{re:exp sum}).
For instance, we study the Birch sum (Theorem~\ref{thm:Birch sum}):
\begin{align}\label{Birch sum}
B(a,b;m)\coloneqq \sum_{x \bmod{m}}e\Big(\frac{ax^3+bx}{m}\Big)
\end{align} 
and the Sali\'e sum (Proposition~\ref{pro:Salie sum good}):
\begin{align}\label{Salie sum}
\widetilde{S}(a,b;m)\coloneqq\sum_{\substack{x \bmod{m}\\x\overline{x}\equiv1\bmod {m}}}\left(\frac{x}{m}\right)e\Big(\frac{ax+b\overline{x}}{m}\Big).
\end{align}

\begin{remark}
In fact, inspired by the proof of Theorems~\ref{thm:distinctness},~\ref{thm:square-free} and~\ref{thm:repal}, we introduce a new concept (Definition~\ref{def:Kloostermanian}): \emph{Kloostermanian function}. And we generalize these three theorems of hyper-Kloosterman sums to
Kloostermanian function (Theorme~\ref{thm:Kloostermanian}). 
\end{remark}

The paper is organized as follows. We first record some basic algebraic properties of $\kl_n(a,b;m)$, especially we prove Lemma~\ref{lm:1} in \S~\ref{sub:algebraic}. Then in \S~\ref{sub:definition} we give some definitions, for example those of $R_k$ and $R_k^p(X)$, which will be fundamental in our proofs. After that, we obtain two intersection properties Lemmas~\ref{lm:R^p-intersection} and ~\ref{lm:R^p-intersection0}
using the previous algebraic properties of $\kl_n(a,b;m)$. These two lemmas together with Lemma~\ref{lm:2} (Lemma~\ref{lm:intersection}) pave the way for our proofs of main theorems in subsequent sections. In \S~\ref{Sec.3}, we give the proof of Theorem~\ref{thm:distinctness}. In \S\S~\ref{Sec.4},\ref{Sec.5}, we prove Theorem~\ref{thm:square-free1} and Theorem~\ref{thm:repal1}, which include Theorem~\ref{thm:square-free} and Theorem~\ref{thm:repal1} as special cases respectively. In the Appendix, we introduce the concept of Kloostermanian functions and more generally almost-Kloostermanian functions, and extend our results for hyper-Kloosterman sums to their corresponding versions as promised. In particular,
a algebraic sums with some natural properties is Kloostermanian function. As example, the Birch sum and the Sali\'e sum are studied.

\subsection*{Notation and Convention}

In this paper, $\ab|\cdot|$ means the cardinality for a finite set and the absolute value for a real number, respectively. It will be clear from the context which meaning is taken. For quantities $\mathcal{A}$ and $\mathcal{B}$, if there is an explicit constant $c$ such that $\ab|\mathcal{A}|\leqslant c \ab|\mathcal{B}|$, we denote $\mathcal{A}=O\ab(\mathcal{B})$ and do not need $c$ to be an absolute constant like usual convention.
\begin{itemize}[leftmargin=*,align=left]
\item $e(x)\coloneqq e^{2\pi ix}$, $\zeta_m\coloneqq e(\frac{1}{m})$ and $\pi(\cdot)$ is the prime counting function. 
\item $ L_k$ is the product of the first $(k-1)$ primes for $k\geqslant2$.
\item $P^+(1)=1$ and $P^+(m)$ denote the largest prime factor of $m$ for an integer $m\geqslant2$.
\item For any integers $u,v$, $(u,v)=1$ means $u$ coprime to $v$; if $v\geqslant1$, $\left(\frac{u}{v}\right)$ is Jacobi symbol.
\end{itemize}
When a modulus $m$ is clear from the context, denote by $\overline{a}$ the multiplicative inverse of $a\bmod m$ for an integer $a$ coprime to $m$, i.e., $a\overline{a}\equiv1\bmod m.$

\subsection*{Acknowledgements}
 I am deeply indebted to Jing Liu for spending an enormous amount of time checking the details of the proofs and revising the writing structure and grammar of this paper. I would like to thank Professor Ye Tian for inviting me to visit MCM Chinese Academy of Sciences and Professor Zhizhong Huang for warm encouragement and valuable suggestions. Earlier versions of this paper primarily deal with Kloosterman sums. I am very grateful to my advisor professor Ping Xi for encouraging me to consider hyper-Kloosterman sums, and for providing many suggestions to improve the structure of the paper.
 
\smallskip

\section{Preliminaries}\label{Sec.2}
In this section, we first collect several algebraic properties of hyper-Kloosterman sums, then use them to derive some preparatory lemmas for proving our main theorems.

\subsection{Algebraic properties of hyper-Kloosterman sums}\label{sub:algebraic}

\begin{lemma}[{\cite[(2.4)]{Wan95}}]\label{lm:congruence}
Fix integer $n\geqslant2$, for any prime $p$ and integers $a,b$ we have
\begin{align*}
\kl_n(a,b;p)\equiv(-1)^{n-1}\bmod{(\zeta_{p}-1)}.
\end{align*}
\end{lemma}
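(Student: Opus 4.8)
The plan is to reduce everything to a one-line computation in the ring $\Z[\zeta_p]$, using only the total ramification of $p$ in the $p$-th cyclotomic field.

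First I would unwind the definition \eqref{eq:Kloosterman} for prime modulus $c=p$. The congruence condition $x\overline{x}\equiv 1\bmod p$ says exactly that $x$ ranges over $\F_p^\times$ and $\overline{x}$ denotes its inverse, so
\[
S(a,b;p)=\sum_{x\in\F_p^\times}\zeta_p^{\,ax+b\overline{x}},
\]
which is a sum of exactly $p-1$ powers of $\zeta_p$, all lying in $\Z[\zeta_p]$. (The degenerate case $p=2$ gives $S(a,b;2)=(-1)^{a+b}$ and the asserted congruence $\equiv-1\bmod(\zeta_2-1)=\bmod\,2$ is immediate, so one records it separately at the end.)

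Next I would reduce modulo the element $\zeta_p-1$. Since $\zeta_p-1$ divides $\zeta_p^{m}-1$ for every integer $m$, each summand satisfies $\zeta_p^{\,ax+b\overline{x}}\equiv 1\bmod(\zeta_p-1)$; summing over the $p-1$ values of $x$ yields
\[
S(a,b;p)\equiv p-1\bmod(\zeta_p-1).
\]
The remaining point is that $p\equiv 0\bmod(\zeta_p-1)$. This is the only substantive input, and it is elementary: substituting $X=1$ into $1+X+\cdots+X^{p-1}=\prod_{j=1}^{p-1}(X-\zeta_p^{j})$ gives $p=\prod_{j=1}^{p-1}(1-\zeta_p^{j})$, and each factor $1-\zeta_p^{j}=(1-\zeta_p)(1+\zeta_p+\cdots+\zeta_p^{j-1})$ is a multiple of $1-\zeta_p$, so $(\zeta_p-1)\mid p$. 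Combining the two congruences gives $S(a,b;p)\equiv p-1\equiv-1\bmod(\zeta_p-1)$.

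The hard part, such as it is, is nothing more than the divisibility $(\zeta_p-1)\mid p$ just noted; once that is in hand the statement is forced. The only thing to be careful about is the edge case $p=2$, where $\zeta_p-1$ is not a prime but the congruence modulo $2$ still holds trivially, and to keep in mind throughout that we are reducing modulo the ideal $(\zeta_p-1)$ of $\Z[\zeta_p]$ rather than modulo $p$ in $\Z$ — the point of the argument being precisely that over $\Z[\zeta_p]$ these become compatible.
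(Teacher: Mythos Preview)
Your argument is correct and is the standard one-line computation: reduce each of the $p-1$ summands modulo $(\zeta_p-1)$ to get $p-1$, then use $(\zeta_p-1)\mid p$. The paper does not actually supply a proof of this lemma; it merely cites \cite[Remark~1.5]{Fisher92}, where the same observation is recorded. So there is nothing to compare against beyond noting that your write-up is exactly the intended justification.

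One minor quibble: your parenthetical that for $p=2$ the element $\zeta_p-1$ ``is not a prime'' is off --- in $\Z[\zeta_2]=\Z$ the element $-2$ is a prime --- and in any case your general argument already covers $p=2$ without needing a separate check, since the sum has $p-1=1$ term and $1\equiv -1\bmod 2$. You can safely drop the special-casing.
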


\begin{lemma}\label{lm:twistmuliplicative}
Fix integer $n\geqslant2$. Then for any integers $a,b$ and $u,v\geqslant1$ with $(u,v)=1$, we have
\begin{align*}
\kl_n(a,b;uv)=\kl_n(a\overline v,b\overline v;u)\kl_n(a\overline u,b\overline u;v).
\end{align*}
\end{lemma}
\proof
This follows directly from the Chinese Remainder Theorem.
\endproof
The following corollary is an immediate consequence of the previous lemmas.
\begin{corollary}\label{cor:nonzero}
For any integers $a,b$ and square-free positive integer $m$, $\kl_n(a,b;m)\neq0$.
\end{corollary}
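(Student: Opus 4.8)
The plan is to reduce to the case of prime modulus via twisted multiplicativity, and then to exploit the congruence of Lemma~\ref{lm:congruence} inside the cyclotomic ring $\Z[\zeta_p]$.

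\textbf{Step 1: Reduction to primes.} Write the square-free integer $n$ as $n = p_1 \cdots p_k$ with the $p_i$ distinct primes. Peeling off one prime at a time and applying Lemma~\ref{lm:twistmuliplicative} repeatedly (legitimate since at each stage the two factors are coprime), one obtains
\[
S(a,b;n) = \prod_{i=1}^{k} S(a_i, b_i; p_i)
\]
for suitable integers $a_i, b_i$ (each gotten from $a,b$ upon multiplying by a unit modulo the relevant prime). Hence it suffices to prove $S(\alpha,\beta;p) \neq 0$ for every prime $p$ and all integers $\alpha,\beta$.

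\textbf{Step 2: The prime case.} Fix a prime $p$. Every summand $e\big((\alpha x + \beta\overline{x})/p\big)$ in the definition of $S(\alpha,\beta;p)$ is a power of $\zeta_p$, so $S(\alpha,\beta;p) \in \Z[\zeta_p]$. By Lemma~\ref{lm:congruence} we have $S(\alpha,\beta;p) + 1 \in (\zeta_p - 1)\,\Z[\zeta_p]$. If $S(\alpha,\beta;p)$ were $0$, this would force $1 \in (\zeta_p-1)\Z[\zeta_p]$, i.e. $\zeta_p - 1$ would be a unit of $\Z[\zeta_p]$. But $\prod_{j=1}^{p-1}(1 - \zeta_p^{\,j}) = \Phi_p(1) = p$, so $\zeta_p - 1$ has norm $\pm p \neq \pm 1$ over $\Q$ and is therefore not a unit, a contradiction. (For $p=2$ this is even more transparent: the only admissible $x$ is $x=1$, giving $S(\alpha,\beta;2) = (-1)^{\alpha+\beta} = \pm1$.) Thus $S(\alpha,\beta;p) \neq 0$, and combining with Step~1 yields $S(a,b;n) \neq 0$.

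\textbf{On the difficulty.} There is essentially no serious obstacle: the corollary is a formal consequence of Lemmas~\ref{lm:congruence} and~\ref{lm:twistmuliplicative}. The only points needing a little care are verifying that the iterated use of twisted multiplicativity is valid for square-free (not merely prime) $n$, and recording that $\zeta_p - 1$ genuinely fails to be a unit in $\Z[\zeta_p]$, which is immediate from the norm computation $N_{\Q(\zeta_p)/\Q}(\zeta_p-1) = \pm p$.
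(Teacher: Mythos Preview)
Your proof is correct and follows exactly the route the paper intends: it states the corollary as ``an immediate consequence of the previous lemmas,'' namely Lemma~\ref{lm:congruence} and Lemma~\ref{lm:twistmuliplicative}, and you have simply written out the details of that immediate consequence. The norm argument showing $\zeta_p-1$ is a non-unit is the natural way to extract non-vanishing from the congruence $S(\alpha,\beta;p)\equiv -1\bmod(\zeta_p-1)$.
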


\begin{lemma}\label{lm:irational}
Let $p$ be a prime, $a,b$ and $t$ integers with $(p,abt)=1$. For any integer $n\geqslant2$, if
\begin{align*}
\frac{\kl_n(a,b;p)}{\kl_n(at,bt;p)}\in\mathbb{Q}
\end{align*}
then $t^n\equiv1\bmod{p}$. 
\end{lemma}
\proof
When $p=2$, it is obvious.

For $p> 2$, assume that
\begin{align*}
\frac{\kl_n(a,b;p)}{\kl_n(at,bt;p)}\in\mathbb{Q}.
\end{align*}
From $(p,t)=1$ we know that $\kl_n(at,bt;p)$ is a Galois conjugate of $\kl_n(a,b;p)$. Hence we get
\begin{align*}
\frac{\kl_n(a,b;p)}{\kl_n(at,bt;p)}=\pm1
\end{align*}
We also have 
\begin{align*}
\kl_n(a,b;p)+\kl_n(at,bt;p)\equiv2(-1)^{n-1}\not\equiv0\bmod{(\zeta_p-1)}.
\end{align*}
by Lemma~\ref{lm:congruence}. Therefore,
\begin{align*}
\kl_n(a,b;p)=\kl_n(at,bt;p).
\end{align*}
Since $(p,ab)=1$, from \cite[Theorem 1.1]{Wan95} we also know that 
\[
\ab[\mathbb{Q}(\zeta_p): \mathbb{Q}\ab(\kl_n(a,b;p))]\mid n.
\]
Hence, we conclude that $t^n\equiv1\bmod{p}.$
\endproof

\begin{lemma}[{\cite[Proposition 2.4]{Washington97}}]\label{lm:cyclotomic}
Let $u,v$ be two positive integers with $(u,v)=1$. Then $ \mathbb{Q}(\zeta_{u})\cap\mathbb{Q}(\zeta_{v})=\mathbb{Q}$.
\end{lemma}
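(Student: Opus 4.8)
The statement is the classical fact that $\mathbb{Q}(\zeta_m)$ and $\mathbb{Q}(\zeta_n)$ are linearly disjoint over $\mathbb{Q}$ when $(m,n)=1$, so I would reproduce the standard textbook argument (otherwise simply cite \cite[Proposition~2.4]{Washington97}). The plan is a degree count through the compositum. First I would record the two ingredients from elementary cyclotomic theory: that $[\mathbb{Q}(\zeta_N):\mathbb{Q}]=\varphi(N)$ for every $N\geqslant1$, and that $\mathbb{Q}(\zeta_m)\mathbb{Q}(\zeta_n)=\mathbb{Q}(\zeta_{mn})$. The latter holds because $\zeta_m=\zeta_{mn}^{n}$ and $\zeta_n=\zeta_{mn}^{m}$ give the inclusion $\supseteq$, while writing $1=\alpha m+\beta n$ with $\alpha,\beta\in\mathbb{Z}$ (possible since $(m,n)=1$) yields $\zeta_{mn}=e(\tfrac{\alpha m+\beta n}{mn})=\zeta_n^{\alpha}\zeta_m^{\beta}$, giving $\subseteq$.

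Next I would invoke the standard Galois-theoretic identity for the compositum of two finite Galois extensions $K,L$ of a field $F$: since $K/F$ is Galois, $KL/L$ is Galois with $\Gal(KL/L)\cong\Gal(K/(K\cap L))$ by restriction, whence $[KL:F]=[K:F][L:F]/[K\cap L:F]$. Every cyclotomic field is abelian over $\mathbb{Q}$, hence Galois, so I may apply this with $K=\mathbb{Q}(\zeta_m)$, $L=\mathbb{Q}(\zeta_n)$, $F=\mathbb{Q}$. Combining it with the two facts above and with the multiplicativity $\varphi(mn)=\varphi(m)\varphi(n)$ (valid precisely because $(m,n)=1$), the left-hand side equals $\varphi(mn)=\varphi(m)\varphi(n)$ while the right-hand side equals $\varphi(m)\varphi(n)/[\mathbb{Q}(\zeta_m)\cap\mathbb{Q}(\zeta_n):\mathbb{Q}]$. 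Hence $[\mathbb{Q}(\zeta_m)\cap\mathbb{Q}(\zeta_n):\mathbb{Q}]=1$, i.e. $\mathbb{Q}(\zeta_m)\cap\mathbb{Q}(\zeta_n)=\mathbb{Q}$.

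Since this is textbook material there is no genuine obstacle; the only place where the hypothesis $(m,n)=1$ really enters is in the two identities $\mathbb{Q}(\zeta_m)\mathbb{Q}(\zeta_n)=\mathbb{Q}(\zeta_{mn})$ and $\varphi(mn)=\varphi(m)\varphi(n)$, and the conclusion does fail without it (for instance $\mathbb{Q}(\zeta_3)\cap\mathbb{Q}(\zeta_9)=\mathbb{Q}(\zeta_3)\neq\mathbb{Q}$). An alternative, arguably cleaner, route runs through ramification: $\mathbb{Q}(\zeta_m)/\mathbb{Q}$ is unramified at every prime not dividing $m$ and $\mathbb{Q}(\zeta_n)/\mathbb{Q}$ at every prime not dividing $n$, so $\mathbb{Q}(\zeta_m)\cap\mathbb{Q}(\zeta_n)$ is unramified at all rational primes; by Minkowski's theorem $\mathbb{Q}$ admits no nontrivial unramified extension, forcing the intersection to be $\mathbb{Q}$. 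In either case I would ultimately defer to \cite[Proposition~2.4]{Washington97} for full details.
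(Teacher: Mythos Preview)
Your argument is correct and is precisely the standard degree-counting proof; however, the paper itself gives no proof at all for this lemma and simply cites \cite[Proposition~2.4]{Washington97}, exactly as you suggest one could do. So your proposal is consistent with (indeed strictly more detailed than) what the paper does.
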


\subsection{Definitions and some preparatory lemmas}\label{sub:definition} 
For an integer $k\geqslant1$ and a real number $X\geqslant1$, denote
\[
\pi_{k}\ab(X)\coloneqq\ab|\Pi_{k}\ab(X)|,
\]
which admits the following asymptotic formula
\cite[Chapter 10, Theorem 10.3]{De-Luca12}: 
\begin{align}\label{asymptotic}
\pi_k\ab(X)\sim \frac{X(\log\log X)^{k-1}}{(k-1)!\log X}
\end{align}
as $X\rightarrow+\infty$. And we have the following inequality for large $k$.
\begin{lemma}\label{lm:k-prime bound}
 There exists a absolute positive constant $C$ such that  
 \begin{align*}
\pi_k\ab(X)\leqslant C\frac{X}{\log X}e^{-\sqrt{\log X}}
\end{align*} 
 for any $X\geqslant3$ and any integers $k\geqslant\frac{\sqrt{\log X}}{3}$.
\end{lemma}
\proof 
It is a direct corollary of the inequality $k!\geqslant\ab(\frac{k}{e})^k$
and the Hardy-Ramanujan inequality \cite{Hardy-Ramanujan17}:
\[
\pi_k\ab(X)\leqslant C_1\frac{X(\log\log X +C_2)^{k-1}}{(k-1)!\log X}.
\]
Here, $C_1$ and $C_2$ are two absolute constants.
\endproof

We also denote
\begin{align}\label{de0}
\Pi_{k}\Big(X,y\Big)\coloneqq\{m\in\Pi_{k}\ab(X):P^+(m)\leqslant y\}.
\end{align}

Fix integers $n\geqslant2$ and $a\neq 0\neq b$, for any given sequence of non-zero complex numbers $(\eta_i)_{i=1}^{\infty}$ and complex valued multiplicative function $f$, denote
\begin{align}\label{de1}
R_{k}\coloneqq\{m\in \Pi_k:\kl_n(a,b;m)=\eta_k f(m)\};
\end{align}
\begin{align}\label{de2}
R_{k}(X)\coloneqq \{m\in \Pi_k(X):\kl_n(a,b;m)=\eta_k f(m)\};
\end{align}
\begin{align}\label{de3}
r_{k}(X)\coloneqq\ab|R_{k}(X)|.
\end{align}

In order to obtain an optimal upper bound for its cardinality $r_k(X)$, we will partition $R_k(X)$ according to the largest prime factor of its elements, which leads to the introduction of the following notation:
\begin{align}\label{de4}
R^p_{k}(X)\coloneqq\ab\{\tfrac{m}{p}: m\in R_k(X),\, P^+(m)=p\}.
\end{align}
These subsets $R_k^p(X)$ of $R_k(X)$ will be the main player in our estimations throughout this paper.

Let us remark here that, whereas the data of the sequence $(\eta_i)$ and the function $f$ is certainly part of the definition of the above objects, but since no confusion will be caused in this work, we omit them from the notations for simplicity.

For $k\geqslant2$, the following observations are immediate once we unravel the relevant definitions.

\begin{align}\label{lm:sum}
r_{k}(X)&=\mathop{\sum}_{p\in(1,X]}\ab|R^p_{k}(X)|\\ 
& =\mathop{\sum}_{p\in \left(1,\frac{X}{L_k}\right]}\ab|R^p_{k}(X)|, \nonumber
\end{align}

\begin{align}\label{lm:p-sum}
R^p_{k}(X)\subseteq\Pi_{k-1}\Big(\frac{X}{p},p\Big).
\end{align}
Despite their innocent appearance, these observations will nevertheless play a central role in the proof of our main theorems.

Denote 
\begin{align}
R^{p}(X)\coloneqq\bigcup_{k=2}^{\infty}R^p_k(X).
\end{align}

\begin{lemma}\label{lm:divisor}
Let $p,q$ be two distinct primes with $(pq,ab)=1$, and $u,v\in R^p(X)\cap R^q(X)$ with $u\neq v$. Then we have $pq\mid(u^n-v^n)$.

More generally, if $p_{1},\dots,p_{t}$ are $t$ distinct primes with $(\prod_{i=1}^{t}p_{i},ab)=1$ and $u,v$ are distinct elements in $\bigcap_{i=1}^{t}R^{p_i}(X)$ for any $t\geqslant2$, then we have 
$(\prod_{i=1}^{t}p_{i})\mid(u^n-v^n)$.
\end{lemma}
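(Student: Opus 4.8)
The plan is to reduce the assertion to the rationality criterion of Lemma~\ref{lm:irational} (Lemma~\ref{lm:1}), applied prime by prime. Let me first treat the two-prime case, since the general case follows the same pattern. Suppose $u,v\in R^p(X)\cap R^q(X)$ with $u\neq v$; say $u\in R^p_{k}(X)$ and $v\in R^p_{\ell}(X)$ for some $k,\ell\geqslant2$. By the definition \eqref{de4}, both $pu$ and $qu$ are square-free integers in $R_k$ with largest prime factor $p$ (resp.\ $q$), so in particular $(u,pq)=1$ and $u$ is square-free; likewise for $v$. Moreover $pu\in R_k$ means $S(a,b;pu)=\eta_k f(pu)$, and similarly $S(a,b;qu)=\eta_k f(qu)$; doing the same for $v$ gives four equations.

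\textbf{Extracting a rationality statement.} First I would use the twisted multiplicativity of Lemma~\ref{lm:twistmuliplicative} to write
\begin{align*}
S(a,b;pu)=S(a\overline{u},b\overline{u};p)\,S(a\overline{p},b\overline{p};u),\qquad
S(a,b;pv)=S(a\overline{v},b\overline{v};p)\,S(a\overline{p},b\overline{p};v),
\end{align*}
and analogously with $q$ in place of $p$. Since $pu$ and $pv$ both lie in $R_k$ for the \emph{same} index $k$ — wait, here I must be careful: $u$ and $v$ may sit in $R^p_k$ and $R^p_\ell$ with $k\neq\ell$, but then they lie in $R^q$ for possibly yet other indices. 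The cleanest route is instead to work modulo $p$ and modulo $q$ separately. Fix the prime $p$. From $S(a,b;pu)=\eta_k f(pu)$ and multiplicativity of $f$ (note $f(pu)=f(p)f(u)$ since $(p,u)=1$), and similarly from $v$, one gets
\begin{align*}
\frac{S(a\overline{u},b\overline{u};p)\,S(a\overline{p},b\overline{p};u)}{S(a\overline{v},b\overline{v};p)\,S(a\overline{p},b\overline{p};v)}=\frac{\eta_k f(p)f(u)}{\eta_\ell f(p)f(v)}=\frac{\eta_k}{\eta_\ell}\cdot\frac{f(u)}{f(v)}.
\end{align*}
The factors $S(a\overline{p},b\overline{p};u)$, $S(a\overline{p},b\overline{p};v)$ are nonzero by Corollary~\ref{cor:nonzero}, so the quantity $S(a\overline{u},b\overline{u};p)/S(a\overline{v},b\overline{v};p)$ equals a rational multiple of itself times known algebraic data — the key observation is that running the \emph{same} manipulation with $q$ instead of $p$ produces the identical right-hand side $\tfrac{\eta_k}{\eta_\ell}\cdot\tfrac{f(u)}{f(v)}$. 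Hence
\begin{align*}
\frac{S(a\overline{u},b\overline{u};p)}{S(a\overline{v},b\overline{v};p)}\cdot\frac{S(a\overline{p},b\overline{p};u)}{S(a\overline{p},b\overline{p};v)}
=\frac{S(a\overline{u},b\overline{u};q)}{S(a\overline{v},b\overline{v};q)}\cdot\frac{S(a\overline{q},b\overline{q};u)}{S(a\overline{q},b\overline{q};v)}.
\end{align*}
The two "mixed" ratios $S(a\overline{p},b\overline{p};u)/S(a\overline{p},b\overline{p};v)$ and $S(a\overline{q},b\overline{q};u)/S(a\overline{q},b\overline{q};v)$ lie in $\mathbb{Q}(\zeta_u)$ and $\mathbb{Q}(\zeta_v)$ respectively — crucially, in cyclotomic fields \emph{coprime} to $p$ and to $q$. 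Rearranging, the ratio $S(a\overline{u},b\overline{u};p)/S(a\overline{v},b\overline{v};p)$, which lies in $\mathbb{Q}(\zeta_p)$, gets expressed as an element of a cyclotomic field whose conductor is coprime to $p$ (built from $\zeta_q$, $\zeta_u$, $\zeta_v$); by Lemma~\ref{lm:cyclotomic} (or its iterate), this forces $S(a\overline{u},b\overline{u};p)/S(a\overline{v},b\overline{v};p)\in\mathbb{Q}$.

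\textbf{Applying Lemma~\ref{lm:1} and concluding.} Now $S(a\overline{u},b\overline{u};p)=\kl(ab\overline{u}^2,p)$ and $S(a\overline{v},b\overline{v};p)=\kl(ab\overline{v}^2,p)$, so writing $t\equiv\overline{u}^2(\overline{v}^2)^{-1}\equiv v^2\overline{u}^2\bmod p$ we have $S(a\overline{v},b\overline{v};p)=S(at,bt;p)$ up to the substitution in Lemma~\ref{lm:1}; rationality of the ratio then yields $t\equiv\pm1\bmod p$, i.e.\ $u^2\equiv v^2\bmod p$, i.e.\ $p\mid(u^2-v^2)$. Repeating verbatim with $q$ gives $q\mid(u^2-v^2)$, and since $p\neq q$ we conclude $pq\mid(u^2-v^2)$. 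For the general case with $t$ primes $p_1,\dots,p_t$ and $u,v\in\bigcap_{i=1}^t R^{p_i}(X)$, the same argument isolates, for each fixed $i$, the ratio $\kl(ab\overline{u}^2,p_i)/\kl(ab\overline{v}^2,p_i)\in\mathbb{Q}(\zeta_{p_i})$ and shows it equals an element of a cyclotomic field of conductor coprime to $p_i$ (now built from all the other $\zeta_{p_j}$ and $\zeta_u,\zeta_v$), hence rational, hence $p_i\mid(u^2-v^2)$; multiplying over the distinct $p_i$ gives $(\prod_{i=1}^t p_i)\mid(u^2-v^2)$.

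\textbf{Main obstacle.} The delicate point is the bookkeeping that lets one cancel the factor $\tfrac{\eta_k}{\eta_\ell}\cdot\tfrac{f(u)}{f(v)}$: one must verify that when $u$ lies in $R^p_k$ and $R^q_{k'}$ for possibly different $k,k'$ (because the index is tied to $\omega$ of the full integer $pu$ versus $qu$, which differ only through the prime being appended — so in fact $\omega(pu)=\omega(qu)=1+\omega(u)$ and the indices coincide), and similarly $v\in R^p_\ell\cap R^q_{\ell'}$ with $\ell=\ell'$; hence the ratio $\eta_k/\eta_\ell$ is genuinely the same whether computed via $p$ or via $q$, and it cancels cleanly when we equate the two expressions. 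Once this is pinned down, the rest is a routine application of the linear disjointness of cyclotomic fields (Lemma~\ref{lm:cyclotomic}) together with Lemma~\ref{lm:1}; I do not expect any real difficulty there, but the argument must be written carefully to make sure all the Kloosterman factors appearing in denominators are nonzero, which is guaranteed by Corollary~\ref{cor:nonzero} since $u,v$ are square-free and coprime to $ab$.
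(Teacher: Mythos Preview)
Your proposal is correct and follows essentially the same route as the paper: form the cross-ratio $\dfrac{S(a,b;pu)}{S(a,b;pv)}\cdot\dfrac{S(a,b;qv)}{S(a,b;qu)}=1$ (which is exactly your observation that the two expressions for $\tfrac{\eta_k}{\eta_\ell}\tfrac{f(u)}{f(v)}$ coincide), expand via twisted multiplicativity, use disjointness of cyclotomic fields to force $S(a\overline u,b\overline u;p)/S(a\overline v,b\overline v;p)\in\mathbb{Q}$, and then invoke Lemma~\ref{lm:irational}. Your resolution of the index bookkeeping ($k=k'$ since $\omega(pu)=\omega(qu)=1+\omega(u)$) is precisely what the paper uses, and the minor slip that each ``mixed'' ratio lies in $\mathbb{Q}(\zeta_{uv})$ rather than $\mathbb{Q}(\zeta_u)$ or $\mathbb{Q}(\zeta_v)$ separately does not affect the argument.
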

\proof
From $u\not=v\in R^p(X)\cap R^q(X)$ we know that there exist integers $k_1,k_2\in[2,+\infty)$ such that
\begin{align*}
u\in R_{k_1}^p(X)\cap R_{k_1}^q(X),\, v\in R_{k_2}^p(X)\cap R_{k_2}^q(X).
\end{align*}
By the definition \eqref{de4}, we have
\begin{align*}
\kl_n(a,b;pu)=\eta_{k_1} f(pu),\, \kl_n(a,b;qu)=\eta_{k_1} f(qu)
\end{align*}
and
\begin{align*}
\kl_n(a,b;pv)=\eta_{k_2} f(pv),\, \kl_n(a,b;qv)=\eta_{k_2} f(qv).
\end{align*}

Since $f$ is a multiplicative function and $\kl_n(a,b;m)\not=0$ for all square-free positive integers $n$, we obtain
\begin{align*}
\frac{\kl_n(a,b;pu)}{\kl_n(a,b;pv)}\frac{\kl_n(a,b;qv)}{\kl_n(a,b;qu)}=1.
\end{align*}
By twisted multiplicativity of hyper-Kloosterman sums (Lemma~\ref{lm:twistmuliplicative}), we derive
\begin{align*}
\frac{\kl_n(a\overline u,b\overline u;p)}{\kl_n(a\overline v,b\overline v;p)}=\frac{\kl_n(a\overline p,b\overline p;v)}
{\kl_n(a\overline p,b\overline p;u)}\frac{\kl_n(a,b;qu)}{\kl_n(a,b;qv)}.
\end{align*}
Since $p$ is coprime to $u,v$ and $q$, we get
\begin{align*}
\frac{\kl_n(a\overline u,b\overline u;p)}{\kl_n(a\overline v,b\overline v;p)}\in \mathbb{Q}(\zeta_p)\cap\mathbb{Q}(\zeta_{quv})=\mathbb{Q}
\end{align*} from Lemma~\ref{lm:cyclotomic}.

By Lemma~\ref{lm:irational}, we know $(u\overline v)^n\equiv1\bmod p$, namely $p\mid(u^n-v^n)$. Similarly, we have $q\mid(u^n-v^n)$, and hence $pq|(u^n-v^n)$. This proves the first statement, and the second one is a direct corollary.
\endproof

\begin{lemma}\label{lm:R^p-intersection}
Let $t\geqslant2$ be an integer, $p_{1},\dots,p_{t}\in(X^{\frac{n}{n+t}},X]$ be $t$ distinct primes and
$X>\ab(\ab|ab|)^{\frac{n+t}{n}}$. Then we have 
\begin{align*}
\ab|\bigcap_{i=1}^{t}R^{p_i}(X)|\leqslant1.
\end{align*}
This intersection property still holds when $R^{p_i}(X)$ is replaced by $R_k^{p_i}(X)$ for any $k\geqslant2$.
\end{lemma}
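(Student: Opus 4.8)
The plan is to argue by contradiction, combining the divisibility relation furnished by Lemma~\ref{lm:divisor} with a crude size estimate on $u^{2}-v^{2}$. The first thing to verify is that the hypothesis of Lemma~\ref{lm:divisor} is in force, namely that $\prod_{i=1}^{2t-2}p_{i}$ is coprime to $ab$: since $X>\ab(\ab|a|+\ab|b|)^{t}$, every prime $p_{i}$ satisfies $p_{i}>X^{1/t}>\ab|a|+\ab|b|$, so $p_{i}\nmid a$ and $p_{i}\nmid b$, whence $\left(\prod_{i=1}^{2t-2}p_{i},ab\right)=1$. This is the sole role played by the arithmetic assumption $X>(|a|+|b|)^{t}$.

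Now suppose, for contradiction, that $u\neq v$ both lie in $\bigcap_{i=1}^{2t-2}R^{p_{i}}(X)$. Unwinding the definition~\eqref{de4}, membership $u\in R^{p_{i}}(X)$ forces $up_{i}\leqslant X$, so $u\leqslant X/p_{i}<X^{1-1/t}$, and likewise $v<X^{1-1/t}$; since $u$ and $v$ are positive integers with $u\neq v$, this gives $0<\ab|u^{2}-v^{2}|<X^{2-2/t}$. On the other hand, Lemma~\ref{lm:divisor} applies (here $t\geqslant2$, so that $2t-2\geqslant2$) and yields $\left(\prod_{i=1}^{2t-2}p_{i}\right)\mid(u^{2}-v^{2})$. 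But each $p_{i}>X^{1/t}$, so $\prod_{i=1}^{2t-2}p_{i}>(X^{1/t})^{2t-2}=X^{2-2/t}$, and the only integer in the open interval $(-X^{2-2/t},X^{2-2/t})$ divisible by $\prod_{i=1}^{2t-2}p_{i}$ is $0$ — contradicting $\ab|u^{2}-v^{2}|>0$. Hence $\ab|\bigcap_{i=1}^{2t-2}R^{p_{i}}(X)|\leqslant1$.

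The stated particular case is exactly the instance $t=2$ (then $2t-2=2$ and $X^{1/t}=\sqrt{X}$). For the final assertion, observe that $R^{p_{i}}_{k}(X)\subseteq R^{p_{i}}(X)$ for every $k\geqslant2$, so replacing $R^{p_{i}}(X)$ by $R^{p_{i}}_{k}(X)$ only shrinks the intersection and the bound persists verbatim. I do not anticipate a genuine obstacle here: essentially all the content has been front-loaded into Lemma~\ref{lm:divisor} (which itself rests on Lemma~\ref{lm:irational} and the twisted multiplicativity of Kloosterman sums). The only two points that call for a little care are deducing the coprimality $(p_{i},ab)=1$ from $X>(|a|+|b|)^{t}$, and using the \emph{open} left endpoint $X^{1/t}$ of the prime range so that the size inequality $\ab|u^{2}-v^{2}|<X^{2-2/t}$ is \emph{strict}; it is this strictness that forces $u^{2}-v^{2}$ to vanish outright, rather than merely to be a nonzero multiple of a large integer.
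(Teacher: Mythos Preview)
Your argument is correct and follows essentially the same route as the paper: a contradiction combining the divisibility from Lemma~\ref{lm:divisor} with the size bound $u,v<X^{1-1/t}$ coming from $p_i>X^{1/t}$. You are slightly more explicit than the paper in checking the coprimality hypothesis $(p_i,ab)=1$ from $X>(|a|+|b|)^t$ and in spelling out the $t=2$ and $R_k^{p_i}(X)\subseteq R^{p_i}(X)$ reductions, but the substance is identical.
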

\proof

Assuming that there exist $u\neq v\in \bigcap_{i=1}^{t}R^{p_i}(X)$, by Lemma~\ref{lm:divisor}, we get 
\begin{align*}
(\prod_{i=1}^{t}p_{i})\mid(u^n-v^n).
\end{align*}
Hence, we obtain
\begin{align*}
X^{\frac{nt}{n+t}}< \prod_{i=1}^{t}p_{i}\leqslant \max\{u^2,v^2\}\leqslant\Big(\dfrac{X}{p_1}\Big)^n< X^{n(1-\frac{n}{n+t})},
\end{align*}
a contradiction.
\endproof

Recall that we have defined 
\begin{align}\label{R_1}
R_{1}=\{p\in \mathcal{P}:\kl_n(a,b;p)=\eta_1 f(p)\}.
\end{align}

\begin{lemma}\label{lm:divisor0}
Let $p,q\in R_1$ be two distinct primes with $(pq,ab)=1$, and $u\in R^p(X)\cap R^q(X)$. Then we have $pq\mid(u^n-1)$.

More generally, if $p_{1},\dots,p_{t}\in R_1$ are $t$ distinct primes with $(\prod_{i=1}^{t}p_{i},ab)=1$ and $u$ is an element in $\bigcap_{i=1}^{t}R^{p_i}(X)$ for any $t\geqslant2$, then we have 
$(\prod_{i=1}^{t}p_{i})\mid(u^n-1)$.
\end{lemma}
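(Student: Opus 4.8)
The plan is to mirror the proof of Lemma~\ref{lm:divisor}, playing the single element $u$ against the \emph{virtual} second element $v=1$: the relation ``$S(a,b;p\cdot1)=\eta_1 f(p)$'' for this virtual element is precisely the hypothesis $p\in R_1$ (and likewise $q\in R_1$), which is why these extra hypotheses are imposed here.

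First I would unravel the memberships. Since $u\in R^p(X)\cap R^q(X)$, square-freeness of $pu$ and $qu$ forces $p\nmid u$ and $q\nmid u$, so the almost-prime count $k\coloneqq\omega(pu)=\omega(u)+1=\omega(qu)$ is the same whether $u$ is viewed through $p$ or through $q$; thus $u\in R_k^p(X)\cap R_k^q(X)$ and $S(a,b;pu)=\eta_k f(pu)$, $S(a,b;qu)=\eta_k f(qu)$. By Corollary~\ref{cor:nonzero} no Kloosterman sum occurring here vanishes, so $f(u),f(p),f(q)$ are all nonzero. Combining $S(a,b;p)=\eta_1 f(p)$, $S(a,b;q)=\eta_1 f(q)$ with multiplicativity of $f$ (and $(p,u)=(q,u)=1$), both ratios $S(a,b;pu)/S(a,b;p)$ and $S(a,b;qu)/S(a,b;q)$ equal $\eta_k f(u)/\eta_1$, hence
\[
\frac{S(a,b;pu)}{S(a,b;p)}\cdot\frac{S(a,b;q)}{S(a,b;qu)}=1 .
\]

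Next I would expand $S(a,b;pu)$ and $S(a,b;qu)$ by the twisted multiplicativity of Lemma~\ref{lm:twistmuliplicative} and solve the identity above for $S(a\overline u,b\overline u;p)/S(a,b;p)$. The resulting right-hand side lies in $\mathbb{Q}(\zeta_q)\cdot\mathbb{Q}(\zeta_u)=\mathbb{Q}(\zeta_{qu})$ (using $(q,u)=1$), while the left-hand side lies in $\mathbb{Q}(\zeta_p)$; since $p$ is coprime to $qu$, Lemma~\ref{lm:cyclotomic} forces $S(a\overline u,b\overline u;p)/S(a,b;p)\in\mathbb{Q}$. Applying Lemma~\ref{lm:irational} to the triple $(a\overline u,b\overline u,u)$, all lying in $\mathbb{F}_p^\times$ because $(abu,p)=1$, this rationality yields $u\equiv\pm1\bmod p$, that is $p\mid(u^2-1)$. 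The symmetric argument gives $q\mid(u^2-1)$, whence $pq\mid(u^2-1)$ since $p\neq q$. For the general statement, applying the first part to each pair $(p_i,p_j)$ with $i\neq j$ gives $p_i\mid(u^2-1)$ for every $i$, and distinctness of the primes $p_i$ then yields $\bigl(\prod_{i=1}^{t}p_i\bigr)\mid(u^2-1)$.

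I do not expect a genuine obstacle, as the skeleton is identical to that of Lemma~\ref{lm:divisor}; the only point needing care is the bookkeeping that legitimises the substitution $v=1$ — namely that ``$1$'' is not literally an element of $R^p(X)$ (the union defining $R^p(X)$ starts at $k=2$), so the relations at the primes $p,q$ must be furnished instead by the hypotheses $p,q\in R_1$, and that the single almost-prime index $k$ attached to $u$ is intrinsic to $u$ and hence common to the $p$- and $q$-pictures.
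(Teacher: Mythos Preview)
Your proposal is correct and follows essentially the same route as the paper: unravel $u\in R_k^p(X)\cap R_k^q(X)$, combine with $p,q\in R_1$ and multiplicativity to obtain the cross-ratio identity $\frac{S(a,b;pu)}{S(a,b;p)}\cdot\frac{S(a,b;q)}{S(a,b;qu)}=1$, then invoke twisted multiplicativity, Lemma~\ref{lm:cyclotomic}, and Lemma~\ref{lm:irational} exactly as in Lemma~\ref{lm:divisor}. Your additional bookkeeping remarks (that $k$ is intrinsic to $u$, and the interpretation of $v=1$ as a ``virtual'' element supplied by the hypothesis $p,q\in R_1$) are accurate and slightly more explicit than the paper's terse ``using the same argument as in Lemma~\ref{lm:divisor}''.
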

\proof
From $u\in R^p(X)\cap R^q(X)$, we know that $u\in R^p_{k}(X)\cap R^q_{k}(X)$ for some $k\geqslant2$. Then we get
\begin{align*}
\kl_n(a,b;pu)=\eta_k f(pu),\, \kl_n(a,b;qu)=\eta_k f(qu).
\end{align*}
Since $p,q\in R_1$ and $f$ is a multiplicative function, we know that
\begin{align*}
\kl_n(a,b;pu)=\eta_k f(u)\frac{\kl_n(a,b;p)}{\eta_1},\, \kl_n(a,b;qu)=\eta_k f(u)\frac{\kl_n(a,b;q)}{\eta_1}.
\end{align*}
Note that $\kl_n(a,b;m)\not=0$ for all square-free positive integers $m$, so we derive
\begin{align*}
\frac{\kl_n(a,b;pu)}{\kl_n(a,b;p)}\frac{\kl_n(a,b;q)}{\kl_n(a,b;qu)}=1.
\end{align*}
Hence, using the same argument as in Lemma~\ref{lm:divisor}, we obtain $pq\mid(u^n-1)$.
This proves the first statement, and the second one is a direct corollary.

\endproof

\begin{lemma}\label{lm:R^p-intersection0}
Let $t\geqslant2$ be an integer, $p_{1},\dots,p_{t}\in R_1\cap(X^{\frac{n}{n+t}},X]$ be $t$ distinct primes and
$X>\ab(\ab|ab|)^{\frac{n+t}{n}}$. Then we have 
\begin{align*}
\ab|\bigcap_{i=1}^{t}R^{p_i}(X)|=0.
\end{align*}
Again this intersection property is retained when $R^{p_i}(X)$ is replaced by $R^{p_i}_k(X)$ for any $k\geqslant2$.
\end{lemma}
\proof
Assume that there exists $u\in \bigcap_{i=1}^{t}R^{p_i}(X)$. Applying Lemma \ref{lm:divisor0}, we get 
\begin{align*}
\prod_{i=1}^{t}p_i\mid(u^n-1).
\end{align*}
We also know that $\omega(u)\geqslant1$ from $u\in \bigcap_{i=1}^{t}R^{p_i}(X)$.
Hence, $u^n-1>0$ and we obtain
\begin{align*}
X^{\frac{nt}{n+t}}< \prod_{i=1}^{t}p_i\leqslant u^n-1<u^n\leqslant\Big(\frac{X}{p_1}\Big)^n< X^{n(1-\frac{n}{n+t})},
\end{align*}
a contradiction.
\endproof

The following elementary result is well-known in extremal combinatorics. For lack of a convenient reference, we provide a proof here.

\begin{lemma}\label{lm:intersection}
 Let $S$ be a finite set with $N$ elements, $S_{1},...,S_{m}$ be $m$ subsets of $S$ satisfying the condition that the intersection of any $t$ of them contains at most 1 element, i.e., $\ab|S_{i_{1}}\cap\cdots\cap S_{i_{t}}|\leqslant 1$ holds for any $t$ distinct integers $i_{1},\dots,i_{t}\in \{1,\dots,m\}$. Then
\begin{align*}
\mathop{\sum}_{i=1}^m\ab|S_{i}|\leqslant m+N\sqrt{m(t-1)}.
\end{align*}
\end{lemma}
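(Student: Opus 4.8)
The plan is a double-counting argument on the number of ordered $t$-element subsets $\{i_1,\dots,i_t\}$ whose corresponding sets have nonempty intersection, weighted by the elements that lie in all of them. First I would fix an element $x\in S$ and let $d(x)=\ab|\{i:x\in S_i\}|$ be its degree, i.e.\ the number of the $S_i$ that contain $x$. Summing degrees over $S$ recovers the quantity we want to bound:
\begin{align*}
\sum_{i=1}^m\ab|S_i|=\sum_{x\in S}d(x).
\end{align*}

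Next I would count pairs $\big(x,\{i_1,\dots,i_t\}\big)$ with $x\in S_{i_1}\cap\cdots\cap S_{i_t}$ in two ways. On one hand, for each $x$ this is $\binom{d(x)}{t}$, so the total is $\sum_{x\in S}\binom{d(x)}{t}$. On the other hand, each $t$-subset $\{i_1,\dots,i_t\}$ contributes at most $1$ by the hypothesis, so the total is at most $\binom{m}{t}$. Hence $\sum_{x\in S}\binom{d(x)}{t}\leqslant\binom{m}{t}$. Now the task is to extract a bound on $\sum_x d(x)$ from a bound on $\sum_x\binom{d(x)}{t}$; the natural tool is convexity (Jensen) applied to $d\mapsto\binom{d}{t}$, or more crudely the inequality $\binom{d}{t}\geqslant\frac{(d-t+1)^t}{t!}$ valid for $d\geqslant t-1$. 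Using the latter, writing $D=\frac1N\sum_x d(x)$ and splitting off the $x$ with $d(x)<t$ (which contribute at most $N(t-1)$ to $\sum_x d(x)$), one gets roughly $N\cdot\frac{(D-t+1)^t}{t!}\lesssim\binom{m}{t}\leqslant\frac{m^t}{t!}$, so $D-t+1\lesssim m\cdot N^{-1/t}$ and thus $\sum_x d(x)\lesssim Nt + m N^{1-1/t}$.

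That crude estimate is already of the right shape, but the clean bound $m+N\sqrt{m(t-1)}$ in the statement is sharper in the dependence on $t$, and obtaining exactly it is the delicate point. I expect the cleanest route is to avoid $\binom{d}{t}$ altogether and instead count pairs $\big(\{x,y\},\{i_1,\dots,i_{t-1}\}\big)$ with $x\neq y$ and $\{x,y\}\subseteq S_{i_1}\cap\cdots\cap S_{i_{t-1}}$: the hypothesis says no $t$ of the $S_i$ share two elements, so for each unordered pair $\{x,y\}$ the number of indices $i$ with $\{x,y\}\subseteq S_i$ is at most $t-1$, giving at most $\binom{t-1}{t-1}$... more precisely, each pair $\{x,y\}$ lies in at most $t-1$ of the sets, so it is counted at most $\binom{t-1}{t-1}=1$ ways — wait, the right bookkeeping is: the number of $(t-1)$-subsets of indices all of whose sets contain both $x$ and $y$ is $\binom{\lambda(x,y)}{t-1}$ where $\lambda(x,y)\leqslant t-1$, hence at most $1$. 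Summing over the at most $\binom{N}{2}$ pairs bounds the count by $\binom{N}{2}$. On the other hand this same count equals $\sum_{\{i_1,\dots,i_{t-1}\}}\binom{\ab|S_{i_1}\cap\cdots\cap S_{i_{t-1}}|}{2}$, which by convexity of $d\mapsto\binom d2$ and Cauchy--Schwarz relates to $\big(\sum_i\ab|S_i|\big)^2$. Carrying this through — counting incidences $(x,i)$ with $x\in S_i$ and applying Cauchy--Schwarz in the form $\big(\sum_x d(x)\big)^2\leqslant N\sum_x d(x)^2$ together with $\sum_x d(x)^2\leqslant \sum_x d(x) + 2\sum_x\binom{d(x)}{2}$ and the pair bound $\sum_x\binom{d(x)}{2}\leqslant (t-1)\binom m2$ — yields $\big(\sum_i\ab|S_i|\big)^2\leqslant N\big(\sum_i\ab|S_i| + (t-1)m(m-1)\big)$, from which $\sum_i\ab|S_i|\leqslant m + N\sqrt{m(t-1)}$ follows after a routine completion-of-the-square estimate on the resulting quadratic inequality. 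The main obstacle is precisely this last bookkeeping: getting the constants to land on $m+N\sqrt{m(t-1)}$ rather than something weaker requires using the pair-intersection bound ($\ab|S_i\cap S_j|$-type counting, derived from the $t$-wise hypothesis) rather than the $t$-wise bound directly, and then being careful that the quadratic inequality $Y^2\leqslant N Y + N(t-1)m^2$ gives $Y\leqslant \tfrac N2 + \sqrt{\tfrac{N^2}{4}+N(t-1)m^2}\leqslant N + N\sqrt{(t-1)m^2}/\sqrt{\,}$... — i.e.\ one must absorb the $\tfrac N2$ term into the stated $m$ (valid since $N\leqslant$ something, or simply by noting $\sqrt{a+b}\leqslant\sqrt a+\sqrt b$) to reach the clean final form.
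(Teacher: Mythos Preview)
You correctly isolate the key observation: the hypothesis forces every unordered pair $\{x,y\}\subseteq S$ to lie in at most $t-1$ of the sets $S_i$. But the ``pair bound'' you then invoke, $\sum_{x}\binom{d(x)}{2}\leqslant (t-1)\binom{m}{2}$, does not follow from this and is in fact false. For a concrete counterexample take $t=3$, $m=3$, $S_1=S_2=S$ and $S_3=\emptyset$: the $3$-wise intersection is empty so the hypothesis holds, yet $\sum_x\binom{d(x)}{2}=N$ while $(t-1)\binom{m}{2}=6$. The quantity $\sum_x\binom{d(x)}{2}$ equals $\sum_{i<j}\ab|S_i\cap S_j|$, and the hypothesis gives no direct control on pairwise intersections of \emph{sets}. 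What your observation actually yields is the dual inequality
\[
\sum_{i=1}^m\binom{\ab|S_i|}{2}\leqslant (t-1)\binom{N}{2},
\]
since the left side counts pairs (set $S_i$, $2$-element subset of $S_i$) and each $2$-element subset of $S$ is contained in at most $t-1$ of the $S_i$.

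Once you use the inequality in this direction, Cauchy--Schwarz must likewise be applied over the $m$ sets rather than the $N$ elements: with $E=\sum_i\ab|S_i|$ one has $E^2\leqslant m\sum_i\ab|S_i|^2=m\bigl(E+2\sum_i\binom{\ab|S_i|}{2}\bigr)\leqslant m\bigl(E+(t-1)N(N-1)\bigr)$, i.e.\ $E^2/m-E\leqslant (t-1)N(N-1)$. Solving this quadratic gives $E\leqslant\tfrac12\bigl(m+\sqrt{m^2+4m(t-1)N(N-1)}\bigr)\leqslant m+N\sqrt{m(t-1)}$ directly, with no stray $N/2$ term to absorb. This is exactly the paper's argument; your difficulty at the end --- landing on $N+m\sqrt{N(t-1)}$ rather than $m+N\sqrt{m(t-1)}$ --- is precisely the symptom of having swapped the roles of elements and sets in both the double count and the Cauchy--Schwarz step.
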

\proof
By the intersection condition, we know that any subset of $2$ elements of $S$ is a subset of at most $(t-1)$ distinct sets $S_{i}$.
Double counting on the collection of $2$-element subsets in all $S_i$ then gives
\begin{align*}
\mathop{\sum}_{i=1}^m\binom{\ab|S_{i}|}{2}\leqslant (t-1)\binom{|S|}{2}.
\end{align*}
Let $E$ denote the sum $\mathop{\sum}_{i=1}^m\ab|S_{i}|$, using the Cauchy-Schwartz inequality
\begin{align*}
(\mathop{\sum}_{i=1}^m\ab|S_{i}|)^2\leqslant m\mathop{\sum}_{i=1}^m\ab|S_{i}|^2
\end{align*}
we know that
\begin{align*}
\frac{E^2}{m}-E\leqslant (t-1)N(N-1).
\end{align*}
By Vieta's formula, we see that
\begin{align*}
E\leqslant \frac{m+\sqrt{m^2+4m(t-1)N(N-1)}}{2}
&\leqslant m+N\sqrt{m(t-1)}
\end{align*}
as desired.
\endproof

\section{Proof of Theorem \ref{thm:distinctness}}\label{Sec.3}
The goal in this section is to estimate
\[
r_k(X)=\ab|\{m\in\Pi_k(X): \kl_n(a,b;m)=\eta_k f(m)\}|
\]
for all integers $k\geqslant2$.

\proof[Proof of Theorem~\ref{thm:distinctness}]
Take $\eta_i=\eta$ for all $i$. When $1\leqslant X\leqslant(\ab|ab|)^{k+1}$, it is easy to see that
\begin{align*}
r_k(X)\leqslant X\leqslant\ab|ab|X^{1-\frac{1}{k+1}},
\end{align*}
which is evidently less than $\pi\ab(\frac{X}{L_k})+5(\ab|ab|+\sqrt{nk})X^{1-\frac{1}{nk}}$.

When $X>(\ab|ab|)^{k+1}$, recalling our observation \eqref{lm:sum}, we have
\begin{align*}
r_{k}\ab(X)&=\mathop{\sum}_{p\in(1,\frac{X}{L_{k}}]}\ab|R^p_{k}\ab(X)|\\
&\leqslant\Sigma^{1}_{k}+\Sigma^{2}_{k}+\Sigma^{3}_{k}+\Sigma^{4}_{k},
\end{align*}
where for all $X\geqslant1$ we have put
\begin{align*}
\Sigma^{1}_{k}&\coloneqq\mathop{\sum}_{p\in(1,X^{\frac{1}{k+1}}]}\ab|R^p_{k}(X)|;\\
\Sigma^{2}_{k}&\coloneqq\mathop{\sum}_{p\in(X^{\frac{1}{k+1}},X^{\frac{1}{k}}]}\ab|R^p_{k}(X)|;\\
\Sigma^{3}_{k}&\coloneqq\mathop{\sum}_{p\in(X^{\frac{1}{k}},X^{\frac{n}{n+1}}]}\ab|R^p_{k}(X)|;\\
\Sigma^{4}_{k}&\coloneqq\mathop{\sum}_{p\in(X^{\frac{n}{n+1}},\frac{X}{L_k}]}\ab|R^p_{k}(X)|.
\end{align*}
We are going to treat these four parts separately in Lemma \ref{lm:Sigma1}, \ref{lm:Sigma2}, \ref{lm:Sigma3} and \ref{lm:Sigma4}. Granting these results, we see easily that
\begin{align*}
r_k(X)&\leqslant\Sigma_k^1+\Sigma_k^2+\Sigma_k^3+\Sigma_k^4\\
&\leqslant X^{\frac{k}{k+1}}+\ab(1+\sqrt{nk})X^{1-\frac{1}{2k}}+
\ab(X^{\frac{n}{n+1}}+4\sqrt{nk}X^{1-\frac{1}{2k}})+
\ab(\pi\ab(\frac{X}{L_k})+2X^{\frac{2}{n+1}})\\
&\leqslant \pi\ab(\frac{X}{L_k})+5\ab(1+\sqrt{nk})X^{1-\frac{1}{nk}}\\
&\leqslant \pi\ab(\frac{X}{L_k})+5\ab(\ab|ab|+\sqrt{nk})X^{1-\frac{1}{nk}}.
\end{align*}

Now by the Prime Number Theorem, we have
\[
    \ab|\{m\in\Pi_k(X):\kl_n(a,b;m)\neq\eta f(m)\}|=\pi_k\ab(X)+O(\pi\ab(\frac{X}{L_k})).
\] 

It is easy to see that
\[
\pi\ab(\frac{X}{L_k})=o(\pi_k\ab(X))
\] for any fixed $k\geqslant2$ by the asymptotic formula (\ref{asymptotic}).
Hence, we complete the proof of Theorem \ref{thm:distinctness}.
\endproof

In the rest of this section we estimate the four parts of $r_k(X)$ separately as promised.
\begin{lemma}\label{lm:Sigma1}
For all integer $k\geqslant2$ and all $X\geqslant1$, we have
\begin{align*}
\Sigma^{1}_{k}&\leqslant X^{\frac{k}{k+1}}.
\end{align*}    
\end{lemma}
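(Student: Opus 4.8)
The plan is to unwind the definition of $\Sigma^1_k$ and reduce the estimate to an elementary count of square-free $k$-almost primes all of whose prime factors are at most $X^{1/(k+1)}$; once this reduction is made, the bound is immediate.

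\textbf{Step 1: Rewrite $\Sigma^1_k$ as a cardinality.} For each prime $p$, the map $n\mapsto n/p$ is a bijection from $\{n\in R_k(X): P^+(n)=p\}$ onto $R^p_k(X)$, and as $p$ varies these fibers are pairwise disjoint with union $\{n\in R_k(X): P^+(n)\leqslant X^{1/(k+1)}\}$ when $p$ ranges over $(1,X^{1/(k+1)}]$. Hence, exactly as in the observation \eqref{lm:sum},
\[
\Sigma^1_k=\mathop{\sum}_{p\in(1,X^{1/(k+1)}]}\ab|R^p_k(X)|=\ab|\{n\in R_k(X): P^+(n)\leqslant X^{1/(k+1)}\}|.
\]

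\textbf{Step 2: Drop the arithmetic constraints and count.} Forgetting the conditions $S(a,b;n)=\eta_k f(n)$ and $n\leqslant X$ (which only enlarges the set, and in fact $P^+(n)\leqslant X^{1/(k+1)}$ together with $\omega(n)=k$ already forces $n\leqslant X^{k/(k+1)}<X$), the last cardinality is at most $\ab|\Pi_k\ab(X,X^{1/(k+1)})|=\ab|\{n:\mu^2(n)=1,\ \omega(n)=k,\ P^+(n)\leqslant X^{1/(k+1)}\}|$. Any such $n$ is, by unique factorization, a product of $k$ distinct primes lying in $\mathcal{P}\cap(1,X^{1/(k+1)}]$, a set of cardinality $\pi\ab(X^{1/(k+1)})$; choosing (arbitrarily) an ordering of these $k$ prime factors gives an injection of this set into the $k$-fold Cartesian product $\big(\mathcal{P}\cap(1,X^{1/(k+1)}]\big)^{k}$. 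Therefore $\Sigma^1_k\leqslant\ab(\pi\ab(X^{1/(k+1)}))^{k}$, as claimed. (One could replace the right-hand side by the sharper $\binom{\pi(X^{1/(k+1)})}{k}$, but this is not needed.) The small-$X$ range is harmless: if $X^{1/(k+1)}<2$ the summation range is empty, so $\Sigma^1_k=0=\pi\ab(X^{1/(k+1)})^{k}$.

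I do not expect any genuine obstacle here: the only point requiring care is the bookkeeping in Step~1 — verifying that $n\mapsto n/p$ is a bijection onto $R^p_k(X)$ on each fiber and that the fibers over distinct primes are disjoint — but both facts are immediate consequences of unique factorization, so the lemma is essentially a triviality once the notation $R^p_k(X)$ is unpacked.
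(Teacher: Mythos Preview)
Your proof is correct and follows essentially the same approach as the paper. The only cosmetic difference is that the paper bounds each summand individually via the inclusion \eqref{lm:p-sum}, $R^p_k(X)\subseteq\Pi_{k-1}\big(\tfrac{X}{p},p\big)\subseteq\Pi_{k-1}\big(X,X^{1/(k+1)}\big)$, and then sums over $p$, whereas you first reassemble the sum into the single cardinality $\ab|\{n\in R_k(X):P^+(n)\leqslant X^{1/(k+1)}\}|$ before bounding; both routes amount to the same trivial count.
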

\proof
By \eqref{lm:p-sum}, we obtain
\begin{align*}
\Sigma^{1}_{k} &=\mathop{\sum}_{p\in(1,X^{\frac{1}{k+1}}]}\ab|R^p_{k}(X)|\\
&\leqslant\mathop{\sum}_{p\in(1,X^{\frac{1}{k+1}}]}\ab|\Pi_{k-1}\Big(\frac{X}{p},p\Big)|\\
&\leqslant\mathop{\sum}_{p\in(1,X^{\frac{1}{k+1}}]}\ab|\Pi_{k-1}\Big(X,X^{\frac{1}{k+1}}\Big)|\\
&\leqslant\pi\ab(X^{\frac{1}{k+1}})\ab|\Pi_{k-1}\Big(X,X^{\frac{1}{k+1}}\Big)|\\
&\leqslant X^{\frac{k}{k+1}}.
\end{align*}
\endproof

\begin{lemma}\label{lm:Sigma2}
For any integer $k\geqslant2$, when $X>(\ab|ab|)^{k+1}$, we have
\begin{align*}
\Sigma^{2}_{k}&\leqslant(1+\sqrt{nk})X^{1-\frac{1}{2k}}.
\end{align*}    
\end{lemma}
\proof
Since $X>(\ab|ab|)^{k+1}$, from Lemma~\ref{lm:R^p-intersection} we know that
\begin{align*}
\ab|\bigcap_{i=1}^{nk}R^{p_i}_{k}(X)|\leqslant1
\end{align*}
for any $nk$ distinct primes $p_{1},...p_{nk}\in (X^{\frac{1}{k+1}},X]$.
By \eqref{lm:p-sum}, we get
\begin{align*}
R^p_{k}(X)\subseteq\Pi_{k-1}\Big(\frac{X}{p},p\Big)\subseteq \Pi_{k-1}\Big(X^{\frac{k}{k+1}},X^{\frac{1}{k}}\Big)
\end{align*}
for all $p\in (X^{\frac{1}{k+1}},X^{\frac{1}{k}}]$.
Denote $I_k\coloneqq\mathcal{P}\cap(X^{\frac{1}{k+1}},X^{\frac{1}{k}}]$. Applying Lemma \ref{lm:intersection}, we derive 
\begin{align*}
\Sigma^{2}_{k}=\sum_{p\in I_{k}}\ab|R^p_{k}(X)|&\leqslant \ab|I_k|+\ab|\Pi_{k-1}\Big(X^{\frac{k}{k+1}},X^{\frac{1}{k}}\Big)|\sqrt{\ab|I_k|(nk-1)}\\
&\leqslant X^{\frac{1}{k}}+\ab( X^{\frac{1}{k}})^{k-1}\sqrt{nkX^{\frac{1}{k}}}\\
&\leqslant\ab(1+\sqrt{nk})X^{1-\frac{1}{2k}}.
\end{align*}
\endproof

\begin{lemma}\label{lm:Sigma3}
For any integer $k\geqslant2$, when $X>(\ab|ab|)^{k}$, we have
\begin{align*}
\Sigma^{3}_{k}&\leqslant X^{\frac{n}{n+1}}+4\sqrt{nk}X^{1-\frac{1}{2k}}.
\end{align*}    
\end{lemma}
\proof
Let us denote $T_{k}\coloneqq\lfloor \frac{(\frac{n}{n+1}-\frac{1}{k})\log X}{\log2} \rfloor$, $J_{t}\coloneqq(2^tX^{\frac{1}{k}},2^{t+1}X^{\frac{1}{k}}]$ for $0\leqslant t\leqslant T_{k}-1$ and $J_{T_k}\coloneqq(2^{T_k}X^{\frac{1}{k}},X^{\frac{n}{n+1}}]$. Then we have the following dyadic decomposition.
\begin{align*}
\Sigma^{3}_{k}=\sum_{t=0}^{T_{k}}\sum_{p\in J_{t}}\ab|R^p_{k}(X)|.
\end{align*}
Since $X> (\ab|ab|)^{k}$, from Lemma~\ref{lm:R^p-intersection} we know that
\begin{align*}
\ab|\bigcap_{i=1}^{(nk-n)}R^{p_i}_{k}(X)|\leqslant1
\end{align*}
for any $(nk-n)$ distinct primes $p_{1},...,p_{(nk-n)}\in (X^{\frac{1}{k}},X]$, a fortiori if all the $p_i\,'s$ are in a single $J_t$ for some $0\leqslant t\leqslant T_{k}$.
By \eqref{lm:p-sum}, we obtain
\begin{align*}
R^p_{k}(X)\subseteq\Pi_{k-1}\Big(\frac{X}{p},p\Big)\subseteq \Pi_{k-1}\Big(2^{-t}X^{\frac{k-1}{k}},2^{t+1}X^{\frac{1}{k}}\Big)
\end{align*}
for any $p\in J_t$ and $0\leqslant t\leqslant T_{k}$.
Applying Lemma \ref{lm:intersection}, we derive 
\begin{align*}
\sum_{p\in J_{t}}\ab|R^p_{k}(X)|\leqslant\ab| J_t\cap\mathcal{P}|+\ab|\Pi_{k-1}\Big(2^{-t}X^{\frac{k-1}{k}},2^{t+1}X^{\frac{1}{k}}\Big)|\sqrt{\ab| J_t\cap\mathcal{P}|(nk-n-1)}.
\end{align*}
Hence
\begin{align*}
\Sigma^{3}_{k}&=\sum_{t=0}^{T_{k}}\sum_{p\in J_{t}}\ab|R^p_{k}(X)|\\
&\leqslant\sum_{t=0}^{T_{k}}\ab| J_t\cap\mathcal{P}|+\ab|\Pi_{k-1}\Big(2^{-t}X^{\frac{k-1}{k}},2^{t+1}X^{\frac{1}{k}}\Big)|\sqrt{\ab| J_t\cap\mathcal{P}|(nk-n-1)}\\
&\leqslant X^{\frac{n}{n+1}}+\sum_{t=0}^{T_{k}}2^{-t}X^{\frac{k-1}{k}}\sqrt{nk(2^{t+1}X^{\frac{1}{k}}-2^{t}X^{\frac{1}{k}})}\\
&\leqslant X^{\frac{n}{n+1}}+\sqrt{nk}X^{1-\frac{1}{2k}}\sum_{t=0}^{T_{k}}2^{\frac{-t}{2}}\\
&\leqslant X^{\frac{n}{n+1}}+4\sqrt{nk}X^{1-\frac{1}{2k}}.
\end{align*}
\endproof

\begin{lemma}\label{lm:Sigma4}
For any integer $k\geqslant2$, when $X>(\ab|ab|)^{\frac{n+2}{n}}$, we have
\begin{align*}
\Sigma^{4}_{k}&\leqslant\pi\ab(\frac{X}{L_k})+2X^{\frac{2}{n+1}}.
\end{align*}    
\end{lemma}
\proof
We decompose $\Sigma^{4}_{k}$ into the following two parts.
\begin{align*}
\Sigma^{4}_{k}=\mathop{\sum}_{\substack{p\in (X^{\frac{n}{n+1}},\frac{X}{L_k}]\\|R^p_{k}(X)|\leqslant1}}\ab|R^p_{k}(X)| + \mathop{\sum}_{\substack{p\in (X^{\frac{n}{n+1}},\frac{X}{L_k}]\\|R^p_{k}(X)|\geqslant2}}\ab|R^p_{k}(X)|
\end{align*}
Evidently,
\begin{align*}
\mathop{\sum}_{\substack{p\in (X^{\frac{n}{n+1}},\frac{X}{L_k}]\\|R^p_{k}(X)|\leqslant1}}\ab|R^p_{k}(X)| 
\leqslant \mathop{\sum}_{p\in (X^{\frac{n}{n+1}},\frac{X}{L_k}]}1\leqslant\pi\ab(\frac{X}{L_k}).
\end{align*}
By \eqref{lm:p-sum}, we know that 
\begin{align*}
R^p_{k}(X)\subseteq\Pi_{k-1}\Big(\frac{X}{p},p\Big)\subseteq \Pi_{k-1}\ab(X^{\frac{1}{n+1}})
\end{align*}
for all primes $p\in (X^{\frac{n}{n+1}},\frac{X}{L_k}]$.

Since $X>(\ab|ab|)^{\frac{n+2}{n}}$, from Lemma~\ref{lm:R^p-intersection} we obtain  
\begin{align}\label{2-intersection}
\ab|R^{p}_{k}(X)\cap R^{q}_{k}(X)|\leqslant1,
\end{align}
for any two different primes $p,q\in (X^{\frac{n}{n+1}},\frac{X}{L_k}]$.

Denote
\[
U_k(X)\coloneqq\{p:p\in (X^{\frac{n}{n+1}},\frac{X}{L_k}],\ab|R^p_{k}(X)|\geqslant2\}.
\]  
Applying Lemma \ref{lm:intersection}, we derive
\begin{align*}
\mathop{\sum}_{\substack{p\in (X^{\frac{n}{n+1}},\frac{X}{L_k}]\\|R^p_{k}(X)|\geqslant2}}\ab|R^p_{k}(X)| 
&\leqslant \ab|U_k(X)|+\ab|\Pi_{k-1}\ab(X^{\frac{1}{n+1}})|\sqrt{\ab|U_k(X)|}\\
&=\ab|U_k(X)|+\pi_{k-1}\ab(X^{\frac{1}{n+1}})\sqrt{\ab|U_k(X)|}.
\end{align*}

According to (\ref{2-intersection}), we know that any 2-element subset of $\Pi_{k-1}\ab(X^{\frac{1}{n+1}})$ is contained in at most one
$R^p_{k}(X)$ for all primes $p\in U_k(X)$. By the definition of $U_k(X)$ and the pigeonhole principle, we find
\begin{align*}
\ab|U_k(X)|&\leqslant \binom{\ab|\Pi_{k-1}\ab(X^{\frac{1}{n+1}})|}{2}\\
&\leqslant \ab(\pi_{k-1}\ab(X^{\frac{1}{n+1}}))^2
\end{align*}
Then
\begin{align*}
\mathop{\sum}_{\substack{p\in (X^{\frac{n}{n+1}},\frac{X}{L_k}]\\|R^p_{k}(X)|\geqslant2}}|R^p_{k}(X)| 
&\leqslant \ab(\pi_{k-1}\ab(X^{\frac{1}{n+1}}))^2+\pi_{k-1}\ab(X^{\frac{1}{n+1}})\sqrt{\pi_{k-1}\ab(X^{\frac{1}{n+1}})^2}\\
&\leqslant2X^{\frac{2}{n+1}}.
\end{align*}
Hence
\begin{align*}
\Sigma^{4}_{k}\leqslant\pi\ab(\frac{X}{L_k})+2X^{\frac{2}{n+1}}.
\end{align*}
This completes the proof
\endproof

\section{Proof of Theorem \ref{thm:square-free}}\label{Sec.4}
To prove Theorem~\ref{thm:square-free}, it suffices to estimate the following quantity:
\begin{align}\label{r(X)'}
r\ab(X)\coloneqq\ab|\{m\leqslant X:\kl_n(a,b;m)=\eta_i f(m),\mu^{2}(m)=1\text{ if }\omega(m)=i\geqslant2\}|.
\end{align}

We denote
\begin{align}\label{P^0}
\mathcal{P}^{0}\coloneqq\mathcal{P}\backslash R_1=\{p\in\mathcal{P}:\kl_n(a,b;p)\neq\eta_1f(p)\};
\end{align}

\begin{align}\label{pi^0}
\pi^0(X)\coloneqq\ab|\{p\leqslant X:p\in\mathcal{P}^0\}|.
\end{align}

By the definition of $r_k(X)$ and \eqref{lm:sum}, we have that
\begin{align}\label{r(X)}
r\ab(X)=\sum_{k\in[2,\frac{\log X}{\log2}]}r_k(X)=\sum_{k\in[2,\frac{\log X}{\log2}]}\sum_{p\in(1,\frac{X}{L_k}]}\ab|R_k^p(X)|.
\end{align}

In order to give a sharp upper bound for $r\ab(X)$, we need to decompose it into the following four parts:
\begin{align}\label{sum1}
\Sigma^1\coloneqq\sum_{k\in[2,\frac{\sqrt{\log X}}{3}]}\sum_{p\in(1,X^{\frac{n}{n+1}}]}\ab|R_k^p(X)|;
\end{align}

\begin{align}\label{sum2}
\Sigma^2\coloneqq\sum_{k\in[2,\frac{\sqrt{\log X}}{3}]}\sum_{\substack{p\in\mathcal{P}^0\\ p\in(X^{\frac{n}{n+1}},\frac{X}{L_k}]}}\ab|R_k^p(X)|;
\end{align}

\begin{align}\label{sum3}
\Sigma^3\coloneqq\sum_{k\in[2,\frac{\sqrt{\log X}}{3}]}\sum_{\substack{p\in R_1\\ p\in(X^{\frac{n}{n+1}},\frac{X}{L_k}]}}\ab|R_k^p(X)|;
\end{align}

\begin{align}\label{sum4}
\Sigma^4\coloneqq\sum_{k\in(\frac{\sqrt{\log X}}{3},\frac{\log X}{\log2}]}r_k(X).
\end{align}

The desired estimates of these four sums will be given in Lemmas~\ref{pro:a},~\ref{pro:b},~\ref{pro:c} and~\ref{pro:d} separately. Using these estimates, we prove the following theorem, and Theorem~\ref{thm:square-free} will be the special case with $\eta_i=\eta$ for all $i$.

\begin{theorem}\label{thm:square-free1}
Fix integers $n\geqslant2,a\neq0,b\neq0$, then for all given sequence of nonzero complex numbers $(\eta_i)_{i=1}^{\infty}$ and multiplicative function $f$, 
\[
    \ab|\{m\leqslant X:\kl_n(a,b;m)=\eta_i f(m),\mu^{2}(m)=1\text{ if }\omega(m)=i\}|\leqslant \pi\ab(X)+\ab(4\ab|ab|n^2+\beta)Xe^{-\frac{2\sqrt{\log X}}{n}}
\] 
for all $X\geqslant1$ and
\begin{align*}
   \ab|\{m\leqslant X:\kl_n(a,b;m)\neq\eta_i f(m),\mu^{2}(m)=1\text{ if }\omega(m)=i\}|=\frac{X}{\zeta\ab(2)}+O(\frac{X}{logX})   
\end{align*}
as $X\rightarrow+\infty$.
Here, $\beta=8+2C$ and $C$ is the absolute constant in Lemma \ref{lm:k-prime bound}, and $\zeta(2)=\sum_{m=1}^{\infty}\frac{1}{m^2}$.
\end{theorem}
\proof
When $1\leqslant X\leqslant\ab(2\ab|ab|)^2$, by definition of $r(X)$ we get
\begin{align*}
r(X)\leqslant X\leqslant2\ab|ab|X^\frac{1}{2}\leqslant 4\ab|ab|Xe^{-\sqrt{\log X}}.
\end{align*}
When $X>(2\ab|ab|)^2$, combining Lemmas~\ref{pro:a},~\ref{pro:b},~\ref{pro:c} and~\ref{pro:d}, we obtain
\begin{align*}
&r(X)\leqslant\Sigma^1+\Sigma^3+\Sigma^3+\Sigma^4\\
&\leqslant4\ab|ab|n^2Xe^{-\frac{2\sqrt{\log X}}{n}}+\ab(\pi^0\Big(\frac{X}{2}\Big)+2X^{\frac{2}{n+1}})+\ab(X^{\frac{1}{n+1}})+
2CXe^{-\sqrt{\log X}}\\
&\leqslant \pi^0\Big(\frac{X}{2}\Big)+\ab(4\ab|ab|n^2+8+2C)Xe^{-\frac{2\sqrt{\log X}}{n}}.
\end{align*}
Here, $C$ is the absolute constant in Lemma~\ref{lm:k-prime bound}. Denoting $\beta\coloneqq8+2C$, evidently, for all $X\geqslant1$ we find
\begin{align}\label{pro:r(X)bound}
r(X)\leqslant\pi^0\Big(\frac{X}{2}\Big)+\ab(4\ab|ab|n^2+\beta)Xe^{-\frac{2\sqrt{\log X}}{n}}.
\end{align}
Thus, for all $X\geqslant1$ we have
\begin{align*}
&\ab|\{m\leqslant X:\mu^{2}(m)=1,\kl_n(a,b;m)=\eta_i f(m)\text{ if }\omega(m)=i\}|\\
&=r_1(X)+r(X)\\
&\leqslant r_1(X)+\pi^0\Big(\frac{X}{2}\Big)+\ab(4\ab|ab|n^2+\beta)Xe^{-\frac{2\sqrt{\log X}}{n}}\\
&\leqslant\pi\ab(X)+\ab(4\ab|ab|n^2+\beta)Xe^{-\frac{2\sqrt{\log X}}{n}}.
\end{align*}
From \cite[Chapter I.3, Theorem 3.10]{Tenenbaum15}, we know that
\begin{align*}
\ab|\{m\leqslant X:\mu^{2}(m)=1\}|=\sum_{d\leqslant\sqrt{X}}\mu(d)\lfloor\frac{X}{d^2}\rfloor
=\frac{X}{\zeta(2)}+O(\sqrt{X}),
\end{align*}
where $\zeta(2)=\sum_{m=1}^{\infty}\frac{1}{m^2}$. Therefore we derive that
\begin{align*}
    \ab|\{m\leqslant X:\mu^{2}(m)=1,\kl_n(a,b;m)\neq\eta_i f(m)\text{ if }\omega(m)=i\}|
    =\frac{X}{\zeta\ab(2)}+O(\frac{X}{\log X}).
\end{align*}
by the Prime Number Theory.
\endproof

In the remainder of this section, we will prove the desired upper bounds of $\eqref{sum1}$, \eqref{sum2}, \eqref{sum3} and \eqref{sum4} in the following four lemmas separately.

\begin{lemma}\label{pro:a}
For all $X\geqslant1$,
\begin{align*}
\Sigma^1=\sum_{k\in[2,\frac{\sqrt{\log X}}{3}]}\sum_{p\in(1,X^{\frac{n}{n+1}}]}\ab|R_k^p(X)|\leqslant4\ab|ab|n^2Xe^{-\frac{2\sqrt{\log X}}{n}}.
\end{align*}
\end{lemma}
\proof
When $1\leqslant X\leqslant(\ab|ab|)^{k+1}$, we have
 \begin{align*}
\sum_{p\in(1,X^{\frac{n}{n+1}}]}\ab|R_k^p(X)|&\leqslant r_k(X)\leqslant X\\
&\leqslant\ab|ab|X^{1-\frac{1}{k+1}}.
\end{align*}
When $X> (\ab|ab|)^{k+1}$, combining Lemma \ref{lm:Sigma1}, \ref{lm:Sigma2} and \ref{lm:Sigma3}, we get
\begin{align*}
\sum_{p\in(1,X^{\frac{n}{n+1}}]}\ab|R_k^p(X)|&=\Sigma^1_k+\Sigma^2_k+\Sigma^3_k\\
&\leqslant \ab(3+5\sqrt{nk})X^{1-\frac{1}{nk}}.
\end{align*}

Hence, for all $X\geqslant1$ we obtain
\begin{align*}
\sum_{k\in[2,\frac{\sqrt{\log X}}{3}]}\sum_{p\in(1,X^{\frac{n}{n+1}}]}\ab|R_k^p(X)|
&\leqslant\sum_{k\in[2,\frac{\sqrt{\log X}}{3}]}\max\{\ab|ab|X^{1-\frac{1}{k+1}},
\ab(3+5\sqrt{nk})X^{1-\frac{1}{nk}}\}\\
&\leqslant\sum_{k\in[2,\frac{\sqrt{\log X}}{3}]}5\ab(\ab|ab|+\sqrt{nk})X^{1-\frac{1}{nk}}\\
&\leqslant5(\ab|ab|+\sqrt{n})Xe^{-\frac{3}{n}\sqrt{\log X}}\ab(\frac{\sqrt{\log X}}{3})^{\frac{3}{2}}\\
&=5(\ab|ab|+\sqrt{n})Xe^{-\frac{2}{n}\sqrt{\log X}}\ab[\frac{n}{2}\ab(\frac{2\sqrt{\log X}}{3n}e^{-\frac{2\sqrt{\log X}}{3n}})]^{\frac{3}{2}}\\
&\leqslant5\ab(\ab|ab|+\sqrt{n})\ab(\frac{n}{2})^\frac{3}{2}Xe^{-\frac{2\sqrt{\log X}}{n}}\\
&\leqslant4\ab|ab|n^2Xe^{-\frac{2\sqrt{\log X}}{n}}.
\end{align*}
\endproof

\begin{lemma}\label{pro:b}
For all $X>(\ab|ab|)^{\frac{n+2}{n}}$,
\begin{align*}
\Sigma^2=\sum_{k\in[2,\frac{\sqrt{\log X}}{3}]}\sum_{\substack{p\in\mathcal{P}^0\\ p\in(X^{\frac{n}{n+1}},\frac{X}{L_k}]}}\ab|R_k^p(X)|
\leqslant \pi^0\Big(\frac{X}{2}\Big)+2X^{\frac{2}{n+1}}.
\end{align*}
\end{lemma}
\proof
Since $R^p_k(X)\subseteq\Pi_{k-1}$ and $\ab|\Pi_{k_1}\cap\Pi_{k_2}|=0$ for any $k_1\neq k_2$, we have
\begin{align*}
\sum_{k\in[2,\frac{\sqrt{\log X}}{3}]}\sum_{\substack{p\in\mathcal{P}^0\\ p\in(X^{\frac{n}{n+1}},\frac{X}{L_k}]}}\ab|R_k^p(X)|
&\leqslant \sum_{\substack{p\in\mathcal{P}^0\\ p\in(X^{\frac{n}{n+1}},\frac{X}{2}]}}\sum_{k\in[2,\frac{\sqrt{\log X}}{3}]}\ab|R_k^p(X)|\\
&=\sum_{p\in\mathcal{P}^0\cap(X^{\frac{n}{n+1}},\frac{X}{2}]}\Big|\bigcup_{k\in[2,\frac{\sqrt{\log X}}{3}]}R_k^p(X)\Big|.
\end{align*}
Denote 
\[
S^p(X)\coloneqq\bigcup_{k\in[2,\frac{\sqrt{\log X}}{3}]}R_k^p(X).
\]
Then we know that
\begin{align*}
&\sum_{k\in[2,\frac{\sqrt{\log X}}{3}]}\sum_{\substack{p\in\mathcal{P}^0\\ p\in(X^{\frac{n}{n+1}},\frac{X}{L_k}]}}\ab|R_k^p(X)|\\
&\leqslant\sum_{\substack{p\in\mathcal{P}^0\cap(X^{\frac{n}{n+1}},\frac{X}{2}]\\|S^p(X)|\leqslant1}}\ab|S^p(X)|+\sum_{\substack{p\in\mathcal{P}^0\cap(X^{\frac{n}{n+1}},\frac{X}{2}]\\|S^p(X)|\geqslant2}}\ab|S^p(X)|.
\end{align*}
Evidently,
\begin{align*}
\sum_{\substack{p\in\mathcal{P}^0\cap(X^{\frac{n}{n+1}},\frac{X}{2}]\\|S^p(X)|\leqslant1}}\ab|S^p(X)|&\leqslant \sum_{\substack{p\in\mathcal{P}^0\cap(X^{\frac{n}{n+1}},\frac{X}{2}]}}1\\
&\leqslant \pi^0\Big(\frac{X}{2}\Big).
\end{align*}
Therefore, it suffices to prove that
\begin{align*}
\sum_{\substack{p\in\mathcal{P}^0\cap(X^{\frac{n}{n+1}},\frac{X}{2}]\\|S^p(X)|\geqslant2}}\ab|S^p(X)|
\leqslant 2X^{\frac{2}{n+1}}.
\end{align*}
By \eqref{lm:p-sum}, we get
\begin{align*}
S^p(X)=\bigcup_{k\in[2,\frac{\sqrt{\log X}}{3}]}R_k^p(X)&\subseteq\bigcup_{k\in[2,\frac{\sqrt{\log X}}{3}]}\Pi_{k-1}\Big(\frac{X}{p},p\Big)\\
&\subseteq\bigcup_{k\in[2,\frac{\sqrt{\log X}}{3}]}\Pi_{k-1}\ab(X^{\frac{1}{n+1}}),
\end{align*}
for all $p\in(X^{\frac{n}{n+1}},\frac{X}{2}]$.
Denote
\[
W(X)\coloneqq\bigcup_{k\in[2,\frac{\sqrt{\log X}}{3}]}\Pi_{k-1}\ab(X^{\frac{1}{n+1}}).
\]
Then $W(X)\subseteq\{m\leqslant X^{\frac{1}{n+1}}: m\in\mathbb{N}\}$ and 
\begin{align}\label{W(X)}
\ab|W(X)|\leqslant X^{\frac{1}{n+1}}.
\end{align}
Since $X\geqslant(\ab|ab|)^{\frac{n+2}{n}}$, for any two primes $p\neq q\in(X^{\frac{n}{n+1}},\frac{X}{2}]$, from Lemma \ref{lm:R^p-intersection} we get
\begin{align}\label{2u-inter}
\ab|S^p(X)\cap S^q(X)|\leqslant1.
\end{align}
Denote 
\[
U(X)\coloneqq\{p\in\mathcal{P}^0\cap(X^{\frac{n}{n+1}},\frac{X}{2}]:\ab|S^p(X)|\geqslant2\}. 
\] 
Applying Lemma~\ref{lm:intersection}, we have
\begin{align}\label{U-W}
\sum_{\substack{p\in\mathcal{P}^0\cap(X^{\frac{n}{n+1}},\frac{X}{2}]\\|S^p(X)|\geqslant2}}\ab|S^p(X)|\leqslant\ab|U(X)|+\ab|W(X)|\sqrt{\ab|U(X)|}.
\end{align}

From \eqref{2u-inter} we know that any 2-element subset of $W(X)$ is contained in at most one
$S^p(X)$ for all primes $p\in U(X)$. By the definition of $U(X)$ and the pigeonhole principle, then we have
\begin{align}\label{sqaure}
\ab|U(X)|\leqslant\binom{\ab|W(X)|}{2}
\leqslant\ab|W(X)|^2.
\end{align}
Combining~\eqref{U-W},~\eqref{sqaure} and~\eqref{W(X)}, we obtain
\begin{align*}
\sum_{\substack{p\in\mathcal{P}^0\cap(X^{\frac{n}{n+1}},\frac{X}{2}]\\|S^p(X)|\geqslant2}}\ab|S^p(X)|&
\leqslant2\ab|W(X)|^2\\
&\leqslant 2X^{\frac{2}{n+1}}.
\end{align*}
Hence the proof is finished.
\endproof

\begin{lemma}\label{pro:c}
For all $X>(\ab|ab|)^{\frac{n+2}{n}}$,
\begin{align*}
\Sigma^3=\sum_{k\in[2,\frac{\sqrt{\log X}}{3}]}\sum_{\substack{p\in R_1\\ p\in(X^{\frac{n}{n+1}},\frac{X}{L_k}]}}\ab|R_k^p(X)|
\leqslant X^{\frac{1}{n+1}}.
\end{align*}
\end{lemma}
\proof
Since $R^p_k(X)\subseteq\Pi_{k-1}$ and $\ab|\Pi_{k_1}\cap\Pi_{k_2}|=0$ for any $k_1\neq k_2$, we have
\begin{align*}
\sum_{k\in[2,\frac{\sqrt{\log X}}{3}]}\sum_{\substack{p\in R_1\\ p\in(X^{\frac{n}{n+1}},\frac{X}{L_k}]}}\ab|R_k^p(X)|
&\leqslant \sum_{\substack{p\in R_1\\ p\in(X^{\frac{n}{n+1}},\frac{X}{2}]}}\sum_{k\in[2,\frac{\sqrt{\log X}}{3}]}\ab|R_k^p(X)|\\
&=\sum_{p\in R_1\cap(X^{\frac{n}{n+1}},\frac{X}{2}]}\Big|\bigcup_{k\in[2,\frac{\sqrt{\log X}}{3}]}R_k^p(X)\Big|.
\end{align*}
For all $p\in(X^{\frac{n}{n+1}},\frac{X}{2}]$, from \eqref{lm:p-sum} we get
\begin{align}\label{good subset}
\bigcup_{k\in[2,\frac{\sqrt{\log X}}{3}]}R_k^p(X)\subseteq\bigcup_{k\in[2,\frac{\sqrt{\log X}}{3}]}\Pi_{k-1}\Big(\frac{X}{p},p\Big)
\subseteq\bigcup_{k\in[2,\frac{\sqrt{\log X}}{3}]}\Pi_{k-1}\ab(X^{\frac{1}{n+1}}).
\end{align}
Since $X>(\ab|ab|)^{\frac{n+2}{n}}$, using Lemma \ref{lm:R^p-intersection0}, for any two different primes $p,q\in R_1\cap(X^{\frac{n}{n+1}},\frac{X}{2}]$ we have
\begin{align}\label{2u-intersection}
\Big|\bigcup_{k\in[2,\frac{\sqrt{\log X}}{3}]}R_k^p(X)\cap \bigcup_{k\in[2,\frac{\sqrt{\log X}}{3}]}R_k^q(X)\Big|=0.
\end{align}
Combining~\eqref{good subset} and~\eqref{2u-intersection}, we obtain
\begin{align*}
\sum_{k\in[2,\frac{\sqrt{\log X}}{3}]}\sum_{\substack{p\in R_1\\ p\in(X^{\frac{n}{n+1}},\frac{X}{L_k}]}}\ab|R_k^p(X)|
&\leqslant \sum_{p\in R_1\cap(X^{\frac{n}{n+1}},\frac{X}{2}]}\Big|\bigcup_{k\in[2,\frac{\sqrt{\log X}}{3}]}R_k^p(X)\Big|\\
&\leqslant\Big|\bigcup_{k\in[2,\frac{\sqrt{\log X}}{3}]}\Pi_{k-1}\ab(X^{\frac{1}{n+1}})\Big|
\leqslant\ab|\{m\leqslant X^{\frac{1}{n+1}}:m\in\mathbb{N}\}|\\
&\leqslant X^{\frac{1}{n+1}}.
\end{align*}
This completes the proof.
\endproof

\begin{lemma}\label{pro:d}
For all $X\geqslant3$, we have
\begin{align*}
\Sigma^4=\sum_{k\in(\frac{\sqrt{\log X}}{3},\frac{\log X}{\log2}]}r_k(X)
\leqslant 2CXe^{-\sqrt{\log X}}.
\end{align*}
Here $C$ is absolute constant in Lemma~\ref{lm:k-prime bound}.
\end{lemma}
\proof
Applying Lemma \ref{lm:k-prime bound}, we obtain
\begin{align*}
\sum_{k\in(\frac{\sqrt{\log X}}{3},\frac{\log X}{\log2}]}r_k(X)
&\leqslant\sum_{k\in(\frac{\sqrt{\log X}}{3},\frac{\log X}{\log2}]}\pi_k\ab(X)\\
&\leqslant\sum_{k\in(\frac{\sqrt{\log X}}{3},\frac{\log X}{\log2}]}C\frac{X}{\log X}e^{-\sqrt{\log X}}\\
&\leqslant 2CXe^{-\sqrt{\log X}}.
\end{align*}
\endproof

\section{Proof of Theorem \ref{thm:repal}}\label{Sec.5}
As in the previous section, Theorem \ref{thm:repal} follows from the next theorem by setting $\eta_i=\eta$ for all $i$.
\begin{theorem}\label{thm:repal1}
Given integers $n\geqslant2,a\neq 0,b\neq0$ and sequence of nonzero complex numbers $(\eta_i)_{i=1}^{\infty}$. Let $f$ be a complex valued multiplicative function satisfying $\kl_n(a,b;p)=\eta_1 f(p)$ for all but finitely many primes $p$. Then
\begin{align*}
\ab|\{m\in\Pi_k(X):\kl_n(a,b;m)=\eta_k f(m)\}|\leqslant\ab(\ab|ab|+nk)X^{1-\frac{1}{k+1}}+\binom{p_f}{k}.
\end{align*}
for every $X\geqslant1$ and integer $k\geqslant2$. 
Here $p_f$ is the maximal prime $p$ with $\kl_n(a,b;p)\neq\eta_1 f(p)$, $\binom{p_f}{k}$ is the $k$-th binomial coefficient.
\end{theorem}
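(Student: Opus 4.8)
The plan is to bound $r_k(X)=\ab|\{n\leqslant X:S(a,b;n)=\eta_k f(n),\mu^{2}(n)=1,\omega(n)=k\}|$ by sorting the relevant $n$ according to their largest prime factor $P^{+}(n)$, crucially exploiting that by hypothesis every prime exceeding $p_f$ already lies in $R_1$. Writing $y\coloneqq X^{1/(k+1)}$, I would split these $n$ into three families: those with $P^{+}(n)\leqslant p_f$; those with $p_f<P^{+}(n)\leqslant y$; and those with $P^{+}(n)>\max\{p_f,y\}$. The first family consists of products of $k$ distinct primes all $\leqslant p_f$, hence has size at most $\binom{\pi(p_f)}{k}\leqslant\binom{p_f}{k}$. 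The second family is contained in $\{n:\omega(n)=k,\ P^{+}(n)\leqslant y\}$, whose size is at most $\binom{\pi(y)}{k}\leqslant y^{k}=X^{k/(k+1)}=X^{1-1/(k+1)}$. (If $y\leqslant p_f$ the second family is empty, and if there is no exceptional prime one takes $p_f=1$ so the first family is empty; either way nothing changes below.) It then remains to bound the third family.

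Each $n$ in the third family writes uniquely as $n=pu$ with $p=P^{+}(n)>\max\{p_f,y\}$ — so $p\in R_1$ and $p>X^{1/(k+1)}$ — and with $u\in\Pi_{k-1}$, $u\leqslant X/p<X^{k/(k+1)}$ and $u\in R_k^{p}(X)$. Hence the third family has size $\sum_{p}\ab|R_k^{p}(X)|$, the sum over $p\in R_1$ with $\max\{p_f,y\}<p\leqslant X/L_k$; switching the order of summation, this equals $\sum_{u}\#\{p:u\in R_k^{p}(X),\ p\in R_1,\ y<p\leqslant X/L_k\}$, the outer sum over $u\in\Pi_{k-1}$ with $u\leqslant X^{k/(k+1)}$. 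For a fixed such $u$ I would split the inner count into primes $p$ dividing $ab$ and primes $p$ coprime to $ab$. There are at most $\omega(ab)\leqslant\ab|a|+\ab|b|$ primes dividing $ab$, and since each of these exceeds $y$ it satisfies $\ab|R_k^{p}(X)|\leqslant X/p<X^{1-1/(k+1)}$, so their total contribution is at most $(\ab|a|+\ab|b|)X^{1-1/(k+1)}$.

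For the primes $p$ coprime to $ab$: if $p_1,\dots,p_m$ are distinct such primes in $R_1$ with $u\in R_k^{p_i}(X)\subseteq R^{p_i}(X)$ for all $i$ and $m\geqslant2$, then Lemma~\ref{lm:divisor0} gives $\prod_{i=1}^{m}p_i\mid(u^{2}-1)$; since $u$ has $k-1\geqslant1$ prime factors we have $u\geqslant2$, so $\prod_{i=1}^{m}p_i\leqslant u^{2}-1<u^{2}\leqslant X^{2k/(k+1)}$, and as each $p_i>X^{1/(k+1)}$ this forces $m<2k$. Thus for each $u$ at most $2k-1$ primes coprime to $ab$ can contribute, and since the number of admissible $u$ is at most $\ab|\Pi_{k-1}(X^{k/(k+1)})|\leqslant X^{k/(k+1)}=X^{1-1/(k+1)}$, these primes contribute at most $(2k-1)X^{1-1/(k+1)}$ in all. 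Combining the three families, $r_k(X)\leqslant\binom{p_f}{k}+\bigl(1+(\ab|a|+\ab|b|)+(2k-1)\bigr)X^{1-1/(k+1)}=\binom{p_f}{k}+(\ab|a|+\ab|b|+2k)X^{1-1/(k+1)}$, which is the assertion. The combinatorial heart is the divisibility $\prod p_i\mid u^{2}-1$ supplied by Lemma~\ref{lm:divisor0}, which here replaces the Lemma~\ref{lm:intersection}-based counting used in Theorem~\ref{thm:distinctness}; the only genuine care needed is the bookkeeping ensuring that for every $X\geqslant1$ each $n$ falls into exactly one of the three families, which I expect to be routine.
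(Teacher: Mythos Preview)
Your argument is correct and follows essentially the same route as the paper: split according to the largest prime factor, bound the small-prime part trivially, and for large primes in $R_1$ use Lemma~\ref{lm:divisor0} plus the size constraint $u<X^{k/(k+1)}$ to show that each $u\in\Pi_{k-1}(X^{k/(k+1)})$ can lie in at most $2k-1$ of the sets $R_k^{p}(X)$, then double count. The only organizational difference is that the paper disposes of the range $X\leqslant(|a|+|b|)^{k+1}$ by the trivial bound $r_k(X)\leqslant X$ (so that for larger $X$ every prime $p>X^{1/(k+1)}$ is automatically coprime to $ab$ and Lemma~\ref{lm:R^p-intersection0} applies directly), whereas you keep $X$ arbitrary and instead peel off the at most $\omega(ab)\leqslant|a|+|b|$ primes dividing $ab$ by hand --- this is equivalent and, if anything, slightly cleaner, since it avoids the case split on~$X$.
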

\proof
In view of \eqref{lm:sum}, we get
\begin{align*}
r_{k}(X)& =\mathop{\sum}_{p\in \left(1,\frac{X}{L_k}\right]}\ab|R^p_{k}(X)|\\
& =\mathop{\sum}_{p\in \mathcal{P}^0\cap\left(1,\frac{X}{L_k}\right]}\ab|R^p_{k}(X)|+ \mathop{\sum}_{p\in R_1\cap\left(1,\frac{X}{L_k}\right]}|R^p_{k}(X)|.
\end{align*}
Denote $p_f$ is the maximal element of $\mathcal{P}^0$ (\eqref{P^0} the set of primes $p$ with $\kl_n(a,b;p)\neq\eta_1 f(p)$). By the definition of $R^p_k(X)$ we know that
\begin{align*}
\mathop{\sum}_{p\in \mathcal{P}^0\cap\left(1,\frac{X}{L_k}\right]}\ab|R^p_{k}(X)|\leqslant \pi_k\Big(\frac{X}{L_k},p_f\Big)\leqslant \binom{p_f}{k}.
\end{align*}
Hence, we only need to prove that
\begin{align*}
\mathop{\sum}_{p\in R_1\cap\left(1,\frac{X}{L_k}\right]}\ab|R^p_{k}(X)|\leqslant\ab(\ab|ab|+nk)X^{1-\frac{1}{k+1}}.
\end{align*}

When $X\leqslant\ab(\ab|ab|)^{k+1}$, we have
\begin{align}\label{a+b}
\mathop{\sum}_{p\in R_1\cap\left(1,\frac{X}{L_k}\right]}\ab|R^p_{k}(X)|\leqslant r_k(X)\leqslant X
\leqslant\ab|ab|X^{1-\frac{1}{k+1}}.
\end{align}

Therefore, it suffices to prove that the following inequality holds for all $X>(\ab|ab|)^{k+1}$.
\begin{align}\label{2k}
\mathop{\sum}_{p\in R_1\cap\left(1,\frac{X}{L_k}\right]}\ab|R^p_{k}(X)|
\leqslant nkX^{1-\frac{1}{k+1}}.
\end{align}
From Lemma \ref{lm:Sigma1}, we know that
\begin{align}\label{k+1}
\mathop{\sum}_{p\in R_1\cap(1,X^{\frac{1}{k+1}}]}\ab|R^p_{k}(X)|\leqslant X^{\frac{k}{k+1}}.
\end{align}

Then it suffices to prove that the following inequality holds for all $X>(\ab|ab|)^{k+1}$.
\begin{align}\label{2k-1}
\mathop{\sum}_{p\in R_1\cap(X^{\frac{1}{k+1}},\frac{X}{L_k}]}\ab|R^p_{k}(X)|
\leqslant(nk-1)X^{1-\frac{1}{k+1}}.
\end{align}

By \eqref{lm:p-sum}, for all $p\in(X^{\frac{1}{k+1}},X]$, we have
\begin{align}\label{subset0}
R_k^p(X)\subseteq\Pi_{k-1}\Big(\frac{X}{p},p\Big)\subseteq\Pi_{k-1}\ab(X^{\frac{k}{k+1}}).
\end{align}
For $X>(\ab|ab|)^{k+1}$, applying Lemma~\ref{lm:R^p-intersection0}, we obtain
\begin{align}\label{intersection0}
\ab|\bigcap_{i=1}^{nk}R^{p_i}_{k}(X)|=0
\end{align}
for any $nk$ distinct primes $p_{1},\dots,p_{nk}\in R_1\cap(X^{\tfrac{1}{k+1}},X]$.  
By \eqref{subset0} and the intersection condition \eqref{intersection0}, we know that any 1-element subset of $\Pi_{k-1}\ab(X^{\frac{k}{k+1}})$ is a subset of at most $(nk-1)$ distinct sets $R_k^p(X)$. Double counting on the collection of $1$-element subsets in all $R_k^p(X)$ then gives
\begin{align}
\mathop{\sum}_{p\in R_1\cap(X^{\frac{1}{k+1}},\frac{X}{L_k}]}\ab|R^p_{k}(X)|\leqslant(nk-1)\ab|\Pi_{k-1}\ab(X^{\frac{k}{k+1}})|
\leqslant(nk-1)X^{1-\frac{1}{k+1}}.
\end{align}
Hence the proof is finished.

\endproof

\appendix

\section{Generalization}

This section devotes to extend all our theorems of hyper-Kloosterman sums to a more general setting and provide an axiomatic treatment of these theorems. 

Denote
\begin{align*}
\mathbb{N}_s\coloneqq\{m\in\mathbb{N}:\mu^2(m)=1\}
\end{align*}
Inspired by the proof of previous theorems, we introduce the following concept.
\begin{definition}\label{def:Kloostermanian}
 For any function $\mathcal{E}(t;m)\colon \mathbb{Z}\times\mathbb{N}\rightarrow\mathbb{C}$, we call $\mathcal{E}(1;m)$ a \emph{Kloostermanian function} in $m\in\mathbb{N}$, if there exist integers $B\geqslant1,N\geqslant2$ such that the following three properties are satisfied. And we also call $(B;N)$ a constant pair of $\mathcal{E}(1;m)$.
 \hspace*{\fill}
\begin{enumerate}[leftmargin=*,align=left]
\item \emph{T-multiplicative property}:
For any $m,m_1,m_2\in\mathbb{N}_s$ with $(m_1,m_2)=1$ and $t\in\mathbb{Z}$, 
\[
\mathcal{E}(t;m)=\mathcal{E}(t+m;m);\,\,\mathcal{E}(t;m)\in \mathbb{Q}(\zeta_{m});
\]
\[\mathcal{E}(t;m_1m_2)=\mathcal{E}(t\overline{m_2};m_1)\mathcal{E}(t\overline{m_1};m_2).
\]

\item \emph{Non-vanishing property}: For any prime $p$ and $t\in\mathbb{Z}$ with $(p,t)=1$, $\mathcal{E}(t;p)\neq0$.

\item \emph{Irrational property}: For any prime $p>B$ and integers $t_1,t_2$ with $(p,t_1t_2)=1$, if 
\[
\mathcal{E}(t_1;p)/\mathcal{E}(t_2;p)\in \mathbb{Q}
\]
then $(\overline{t_1}t_2)^N\equiv1\bmod p$. 
\end{enumerate}
\end{definition}

For any integers $n\geqslant2,a\neq0,b\neq0$, define $K^{a,b}_n(t;m)\coloneqq\kl_n(at,bt;m)$. Then we know that $K^{a,b}_n(1;m)$ is a Kloostermanian function in $m$ with constant pair $(\ab|ab|;n)$ from Lemma~\ref{lm:twistmuliplicative}, Corollary~\ref{cor:nonzero} and Lemma~\ref{lm:irational}. Indeed, all of our theorems of $\kl_n(a,b;m)$ can be extend to general Kloostermanian functions, yielding the following theorem.
\begin{theorem}\label{thm:Kloostermanian}
Let $\mathcal{E}(1;m)$ be a Kloostermanian function in $m$ with constant pair $(B;N)$. Then for any sequence of nonzero complex numbers $(\eta_i)^{\infty}_{i=1}$ and complex valued multiplicative function $f$, we have
\[
     \ab|\{m\leqslant X:\mathcal{E}(1;m)=\eta_k f(m),m\in\Pi_k\cap\mathbb{N}_s\}|\leqslant \pi\ab(\frac{X}{L_k})+5\ab(B+\sqrt{Nk})X^{1-\frac{1}{Nk}}
\]
and 
\begin{align*}
 \ab|\{m\leqslant X:m\in\mathbb{N}_s, \mathcal{E}(1;m)=\eta_i f(m)\text{ if } \omega(m)=i\}|
 \leqslant \pi\ab(X)+(4BN^2+\beta)Xe^{-\frac{2\sqrt{\log X}}{N}}
\end{align*}
for all integer $k\geqslant2$ and real number $X\geqslant1$.

In particular, if $\mathcal{E}(1;p)=\eta_1f(p)$ holds for all but finite primes $p$, then we further have
\[
     \ab|\{m\leqslant X:\mathcal{E}(1;m)=\eta_k f(m),m\in\Pi_k\cap\mathbb{N}_s\}|\leqslant \ab(B+Nk)X^{1-\frac{1}{k+1}}+\binom{p_f}{k}
\]
for all integer $k\geqslant2$ and real number $X\geqslant1$.
Here, $p_f$ is the maximal prime $p$ with $\mathcal{E}(1;p)\neq\eta_1f(p)$.
\end{theorem}
\proof
Note that Theorems~\ref{thm:distinctness},~\ref{thm:square-free1} and~\ref{thm:repal1} are completely determined by Lemmas~\ref{lm:R^p-intersection} and~\ref{lm:R^p-intersection0}. And it is also easy to check that these two lemmas are completely determined by the fact that $\kl_n(a,b;m)$ is a Kloostermanian function in $m$ with constant pair $(\ab|ab|;n)$ for any given integers $n\geqslant2$, $a\neq0,b\neq0$. Therefore, the theorem follows immediately.
\endproof

\begin{remark}\label{rmk:n to p}
 If $\mathcal{E}(1;m)$ possesses T-multiplicative property but does not possess Non-vanishing property. Then there exists $p_0$ such that $\mathcal{E}(1;p_0)=0$ and $\mathcal{E}(1;mp_0)=0$ for any $m\in\mathbb{N}$ with $p\nmid m$. Therefore, for any integer $k\geqslant3$ and complex valued multiplicative function $f$ with $f(p)=\mathcal{E}(1;p)$, we have
\[
     \ab|\{m\in\Pi_k(X):\mathcal{E}(1;m)=f(m)\}|\geqslant \ab|\{m\in\Pi_{k-1}\ab( \frac{X}{p_0}): (m,p_0)=1\}|\gg \pi_{k-1}\ab(X).
\]
This means all estimates in Theorems~\ref{thm:distinctness},~\ref{thm:square-free} and~\ref{thm:repal} do not hold for $\mathcal{E}(1;m)$.
\end{remark}

In fact, our primary motivation for defining Kloostermanian function is to generalize and unify all algebraic exponential sums possessing Non-vanishing property and Irrational property, like hyper-Kloosterman sums. We now return to algebraic exponential sums.

Let $l\geqslant1,r\geqslant1$ be integers. For polynomials $F_1,\dots,F_l,g,h\in\mathbb{Z}[x_1,\dots,x_r]$, denote 
\begin{align*}
V_{F,g}(\frac{\mathbb{Z}}{m\mathbb{Z}})\coloneqq\{x\in\mathbb{A}^n(\frac{\mathbb{Z}}{m\mathbb{Z}}):F_1(x)=\cdots=F_l(x)=0,g(x)\neq0\}
\end{align*}
and
\begin{align}\label{algebraic exp sum}
S_{F,g}(h;m)\coloneqq\sum_{x\in V_{F,g}(\frac{\mathbb{Z}}{m\mathbb{Z}})}e\Big(\frac{h(x)}{m}\Big)
\end{align}    
for $m\in\mathbb{N}$. 

It is easy to see that $\kl_n(a,b;m)$ is a special case of $S_{F,g}(h;m)$. For $S_{F,g}(h;m)$, we have the following theorem:
\begin{theorem}\label{thm:exp sum}
The exponential sum $S_{F,g}(h;m)$ is a Kloostermanian function in $m$ with constant pair $(B;N)$, if there exist integers $B\geqslant1,N\geqslant2$ such that the following two conditions are satisfied.
\hspace*{\fill}
\begin{itemize}[leftmargin=*,align=left]
\item $S_{F,g}(h;p)\neq0$ holds for all primes $p$.
\item For any prime $p>B$ and integer $t$ with $(p,t)=1$, if 
\[
S_{F,g}(h;p)/S_{F,g}(th;p)\in \mathbb{Q}
\]
then $t^N\equiv1\bmod p$. 
\end{itemize}
\end{theorem}
\proof
Evidently, $S_{F,g}(h;m)$ has T-multiplicative property. It is also easy to see that the above two conditions are equivalent to $S_{F,g}(h;m)$ having Non-vanishing property and Irrational property.
\endproof
\begin{remark}\label{re:exp sum}
It is rather difficult to provide a unified definition for all algebraic exponential sums. Therefore, we take 
$S_{F,g}(h;m)$ as a representative example to illustrate that any algebraic exponential sum which satisfies the conditions of Theorem \ref{thm:exp sum} is a Kloostermanian function.
\end{remark}

In the following theorem we obtain necessary and sufficient criterion for the coefficients $a,b$ in the Birch sum $B(a,b;m)$, another important exponential sum in the class $S_{F,g}$, for $B(a,b;m)$ to satisfy the relevant conditions as in Theorem~\ref{thm:exp sum} and hence be Kloostermanian.
\begin{theorem}\label{thm:Birch sum}
The Birch sum:
\begin{align*}
B(a,b;m)=\sum_{x \bmod{m}}e\Big(\frac{ax^3+bx}{m}\Big)
\end{align*}
is a Kloostermanian function in $m$ if and only if $a,b$ satisfy the following two conditions.
\hspace*{\fill}
\begin{itemize}[leftmargin=*,align=left]
\item  $a\neq0$.
\item  For any prime $p$, if $p=3$ then $p|(a+b)$; if $p|a$ then $p|b$; if $p\equiv2\bmod{3}$ with $p|b$ then $p|a$.
\end{itemize}
In particular, $(|ab|;2)$ is aconstant pair of $B(a,b;m)$ if both these conditions are satisfied.
\end{theorem}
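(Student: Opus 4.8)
The plan is to deduce both implications from the criterion of Theorem~\ref{thm:kloostermanian}: show that the two conditions make $B(a,b;c)$ a kloostermanian function in the sense of Definition~\ref{def:kloostermanian}, and conversely that if one of the conditions fails then $B(a,b;p)=0$ for a suitable prime $p$, whence by Remark~\ref{rmk:n to p}(2) none of Theorems~\ref{thm:distinctness},~\ref{thm:square-free},~\ref{thm:repal} can hold for $B$. The ``only if'' direction is immediate: since $x^{3}\equiv x\bmod 3$ one has $B(a,b;3)=\sum_{x\bmod 3}e\ab((a+b)x/3)$, which vanishes unless $3\mid a+b$; if $p\mid a$ then $B(a,b;p)=\sum_{x\bmod p}e(bx/p)$ vanishes when $p\nmid b$; if $p\equiv2\bmod 3$ then $x\mapsto x^{3}$ permutes $\F_{p}$, so if $p\mid b$ and $p\nmid a$ then $B(a,b;p)=\sum_{y}e(ay/p)=0$; and if $a=0$ then $B(0,b;p)=0$ for any $p\nmid b$ when $b\ne0$, while $a=b=0$ is already excluded because a kloostermanian function needs $\ab|a|+\ab|b|\geqslant1$.

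For the ``if'' direction assume both conditions. The \emph{T-multiplicative property} holds with no restriction on $a,b$: the inclusion $B(at,bt;n)\in\Q(\zeta_{n})$ and the periodicity $B(at,bt;n)=B(a(t+n),b(t+n);n)$ are clear, and the twisted multiplicativity $B(at,bt;mn)=B(at\overline n,bt\overline n;m)\,B(at\overline m,bt\overline m;n)$ follows from the Chinese Remainder Theorem exactly as in Lemma~\ref{lm:twistmuliplicative}, by reducing the cubic $atx^{3}+btx$ modulo $m$ and modulo $n$ separately. By twisted multiplicativity the remaining two properties reduce to prime moduli, and the key structural observation there --- the analogue of Lemma~\ref{lm:Galois} --- is that $B(at,bt;p)=\sigma_{t}\ab(B(a,b;p))$ for $(t,p)=1$, where $\sigma_{t}\in\Gal(\Q(\zeta_{p})/\Q)$ sends $\zeta_{p}\mapsto\zeta_{p}^{t}$, while the substitution $x\mapsto-x$ shows that $B(a,b;p)\in\R$, i.e.\ that $\sigma_{-1}$ fixes it. Consequently $B(at,bt;p)\ne0\iff B(a,b;p)\ne0$, and if $B(a,b;p)/B(at,bt;p)=r\in\Q^{\times}$ then iterating $\sigma_{t}$ over the order of $t$ forces $r^{\mathrm{ord}(t)}=1$, so (once $B(a,b;p)\ne0$, verified below) $r=\pm1$; it then remains to show, for $p>\ab|a|+\ab|b|$, that $\sigma_{t}$ can neither fix nor negate $B(a,b;p)$ when $t\not\equiv\pm1\bmod p$.

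For the primes $p\mid 3ab$ the conditions yield $B(a,b;p)=p$ in every case ($p\mid a$, hence $p\mid b$; $p=3$, hence $3\mid a+b$; $p\equiv2\bmod 3$ and $p\mid b$, hence $p\mid a$), except when $p\mid b$, $p\nmid a$, $p\equiv1\bmod 3$, in which case $B(a,b;p)=\sum_{x}e(ax^{3}/p)=2\RRe\ab(\overline\chi(a)\tau(\chi))$ for a cubic character $\chi$ and its Gauss sum $\tau(\chi)$, and this is nonzero because $\tau(\chi)^{3}=p\,\pi$ with $\pi\in\Z[\zeta_{3}]$ of norm $p$, so vanishing would force $\pi^{2}=-p$, impossible when $p\ne3$; for all such primes $p\leqslant\ab|a|+\ab|b|$, so the Irrational property is not demanded there. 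For the good primes $p\nmid 3ab$ --- which include every $p>\ab|a|+\ab|b|$ --- put $N(u)=\ab|\{x\bmod p:ax^{3}+bx\equiv u\}|\in\{0,1,2,3\}$, so that $B(a,b;p)=\sum_{u}N(u)\zeta_{p}^{u}$; using only $\sum_{u}\zeta_{p}^{u}=0$ one reads off that $B(a,b;p)=0$ iff $N$ is constant, that $\sigma_{t}(B(a,b;p))=B(a,b;p)$ iff $N$ is invariant under multiplication by $\langle t\rangle$, and that $\sigma_{t}(B(a,b;p))=-B(a,b;p)$ forces $N(u)+N(ut^{-1})\equiv2$, hence $N\leqslant2$ everywhere. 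Each of these is ruled out by Weil-type bounds, in the spirit of the endgame of Lemma~\ref{lm:distinct}: $\sum_{u}N(u)\overline\psi(u)=\sum_{x}\overline\psi(ax^{3}+bx)\ll\sqrt p$ for every nontrivial multiplicative character $\psi$ (as $ax^{3}+bx$ is squarefree of degree $3$), while $\sum_{u}\binom{N(u)}{2}=\tfrac12\#\{(x,y):x\ne y,\ a(x^{2}+xy+y^{2})+b=0\}=\tfrac{p}{2}+O_{a,b}(1)$ from the smooth conic $a(x^{2}+xy+y^{2})+b=0$, and $\#\{u:N(u)=3\}=\tfrac{p}{6}+O_{a,b}(1)>0$; feeding these into the Plancherel identity for $\F_{p}^{\times}$ (on which $N$ is automatically $\langle-1\rangle$-invariant) excludes a constant $N$, an $N\leqslant2$, and an $N$ invariant under a subgroup $\langle t,-1\rangle$ of order exceeding $4$, leaving only $\mathrm{ord}(t)=4$, i.e.\ $t^{2}\equiv-1\bmod p$; but then $u\mapsto\chi_{2}(D(u))$ with $D(u)=-27a^{2}u^{2}-4ab^{3}$ would be $\langle t\rangle$-invariant off $O(1)$ points, forcing $\chi_{2}(\gamma^{2}-u^{4})=1$ for all fourth powers $u^{4}\ne\gamma^{2}$ with $\gamma=4b^{3}/(27a)\ne0$, which contradicts the Weil bound $\sum_{w}\chi_{2}(\gamma^{2}-w)\chi_{4}(w)\ll\sqrt p$ once $p$ is large. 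With the three properties established, $B(a,b;c)$ is kloostermanian and Theorem~\ref{thm:kloostermanian} gives the theorem.

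The crux is the analysis at the good primes. For Kloosterman sums the congruence $\kl(a,p)\equiv-1\bmod(\zeta_{p}-1)$ instantly yields non-vanishing (Corollary~\ref{cor:nonzero}) and a one-line Galois average yields Lemma~\ref{lm:irational}; but $B(a,b;p)\equiv0\bmod(\zeta_{p}-1)$, its Galois trace $p\ab(N(0)-1)$ genuinely can vanish, and there is no reduction of $B(a,b;p)$ to a Kloosterman sum, so one must run the above character-sum and point-count estimates by hand --- including the extra Jacobi-sum estimate needed to kill the case $\mathrm{ord}(t)=4$ --- and, most delicately, make every bound explicit enough to cover all $p>\ab|a|+\ab|b|$ so that no sporadic prime slips through (the finitely many remaining good $p\leqslant\ab|a|+\ab|b|$ being checked directly for non-vanishing).
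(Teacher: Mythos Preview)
Your ``only if'' direction and the reduction to prime moduli are fine and match the paper. The ``if'' direction, however, takes a genuinely different route from the paper and leaves real gaps.

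The paper does \emph{not} argue via Weil bounds, Plancherel, or point counts. Instead it quotes two precise results of Livn\'e \cite[Lemma~2.2, Corollary~2.3]{Livne87}: for $p\geqslant7$ with $(p,ab)=1$ one has the $\lambda$-adic congruence
\[
B(at,bt;p)\equiv -\lambda^{l}(at)^{l}\ \ (p\equiv1\bmod 3),\qquad B(at,bt;p)\equiv -l\lambda^{l}(at)^{l-1}\ \ (p\equiv2\bmod 3)
\]
modulo $\lambda^{l+1}$, where $\lambda=\zeta_p-1$ and $l=\lfloor(p+1)/3\rfloor$. These give non-vanishing instantly, and if $B(a,b;p)=-B(at,bt;p)$ they force $t^{l}\equiv-1$ or $t^{l-1}\equiv-1\bmod p$, which pins down $t\equiv-1$. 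For $B(a,b;p)=B(at,bt;p)$ the paper invokes Livn\'e's Corollary~2.3 that $B(a,b;p)$ \emph{generates} the maximal real subfield $\mathbb{Q}(\zeta_p)^{+}$, so $\sigma_t$ fixes it only if $t\equiv\pm1$. This is clean, works for every $p\geqslant7$, and requires no explicit constants.

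Your approach is asymptotic in nature, and the gap is exactly where you flag it: you need every estimate to bite for \emph{all} $p>\ab|a|+\ab|b|$, not just for $p$ large. Your conic argument for non-vanishing and for $\#\{N=3\}>0$ can indeed be made explicit (the smooth conic has exactly $p\pm1$ affine points and $\#\{N=2\}\leqslant2$), but your Plancherel argument for $\sigma_t(B)=B$ is only sketched: with $h=\ab|\langle t,-1\rangle|$ it yields roughly $p(1-4/h)\leqslant9$, which for $h=6$ still allows $p\leqslant27$ and for $h=4$ says nothing, forcing an ad hoc treatment of $\mathrm{ord}(t)=4$ via a further Weil bound that again only works for $p$ beyond some absolute threshold. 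None of the residual finite checks are carried out, and with $\ab|a|+\ab|b|$ as small as $2$ they are not negligible. In short, your outline is plausible for large $p$ but is not a proof of the stated theorem; the paper's algebraic route via Livn\'e avoids the issue entirely.
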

\proof
If $a=0$, then $B(a,b;m)$ is a multiplicative function and evidently not a Kloostermanian function. 
If $a\neq0$ but the second condition not satisfied is not satisfied, then $B(a,b;m)$ does not possess Non-vanishing property.
Thus, the necessity is proved; it remains only to prove the sufficiency.

Suppose that integers $a,b$ satisfy these two conditions. It is easy to check that $ab\neq0$.
By Theorem~\ref{thm:exp sum}, it suffices to prove that $B(a,b;m)$ satisfies the following two conditions:
\hspace*{\fill}
\begin{itemize}[leftmargin=*,align=left]
\item $B(a,b;p)\neq0$ holds for all primes $p$.
\item  For any prime $p>\ab|ab|$ and integers $t$ with $(p,t)=1$, if
$B(a,b;p)/B(at,bt;p)\in\mathbb{Q}$ then $t^2\equiv1\bmod p$.    
\end{itemize}
Note that $B(ta,tb,5)=\kl_2(tb,ta;5)$ for all $t\in\mathbb{Z}$, thus it is evident for $p\leqslant5$.

Henceforth, we assume that $p\geqslant7$. If $p|a$ and $p|b$, obviously $B(a,b;p)=p\neq0$.
And if $(p,ab)=1$ or $p\equiv1\bmod{3}$ with $p|b$ and $(p,a)=1$, then by \cite[Lemma 2.2]{Livne87} we have
\begin{align}\label{congruence1}
B(at,bt;p)\equiv-\lambda^l(at)^l\bmod {\lambda^{l+1}}\text{ if } p\equiv1\bmod{3}
\end{align}
and
\begin{align}\label{congruence2}
B(at,bt;p)\equiv-l\lambda^l(at)^{l-1}\bmod {\lambda^{l+1}} \text{ if } p\equiv2\bmod{3}
\end{align}
for any integer $t$ with $(p,t)=1$. Here, $l=\lfloor\frac{p+1}{3}\rfloor$ and $\lambda=\zeta_p-1$.
Hence, $B(a,b;p)\neq0$.

Assume that $p>\ab|ab|$ and $B(a,b;p)/B(at,bt;p)\in\mathbb{Q}$ for integer $t$ with $(p,t)=1$. Since $B(at,bt;p)$ is a Galois conjugate of $B(a,b;p)$, we have 
\[
B(a,b;p)=\pm B(at,bt;p).
\]
If $B(a,b;p)=-B(at,bt;p)$, from~\eqref{congruence1} and~\eqref{congruence2} we obtain
\begin{align*}
t^{\frac{p-1}{3}}=t^l\equiv-1\bmod{p} \text{ if } p\equiv1\bmod{3}
\end{align*}
and
\begin{align*}
t^{\frac{p-2}{3}}=t^{l-1}\equiv-1\bmod{p} \text{ if } p\equiv2\bmod{3}.
\end{align*}
Therefore, we conclude that $p\equiv2\bmod{3}$ and $t\equiv-1\bmod{p}$.

From \cite[Corollary 2.3]{Livne87} we know that $B(a,b;p)$ is a generator of the maximal real subfield
$\mathbb{Q}(\zeta_p)^{+}$ of $\mathbb{Q}(\zeta_p)$. Thus, if $B(a,b;p)=B(at,bt;p)$ then we have $t\equiv\pm1\bmod{p}$. This completes the proof.
\endproof

From Theorem~\ref{thm:Birch sum}, we know that there are only a zero-density of integer pairs $a,b$ such that $B(a,b;m)$ to be a Kloostermanian function in $m$. This leads us to introduce the following concepts.

For any positive density subset $\mathcal{M}$ of $\mathcal{P}$, denote
\begin{align*}
\mathbb{N}(\mathcal{M})\coloneqq\{n\in\mathbb{N}:\mu^2(n)=1,\text{ if }p\in\mathcal{P}\text{ and }p|n\text{ then }p\in\mathcal{M}\}
\end{align*}
and
\begin{align*}
\pi\ab(X;\mathcal{M})\coloneqq\{p\leqslant X:p\in\mathcal{M}\}.
\end{align*} 

\begin{definition}\label{def:almost-Kloostermanian}
 Let $\mathcal{E}(t;m)\colon \mathbb{Z}\times\mathbb{N}\rightarrow\mathbb{C}$ be a function possesses the following three properties for some integers $B\geqslant1,N\geqslant2$ and positive density subset $\mathcal{M}$ of $\mathcal{P}$. Then we call $\mathcal{E}(1;m)$ a 
 \emph{almost-Kloostermanian function} in $m$, and also call $(B;N)$ a constant pair of $\mathcal{E}(1;m)$, and $\mathcal{M}$ a \emph{good prime set}.
\hspace*{\fill}
\begin{enumerate}[leftmargin=*,align=left]
\item \emph{Almost-T property}:
For any $m,m_1,m_2\in\mathbb{N}(\mathcal{M})$ and $t\in\mathbb{Z}$ with $(m_1,m_2)=1$, 
\[
\mathcal{E}(t;m)=\mathcal{E}(t+m;m);\,\,\mathcal{E}(t;m)\in \mathbb{Q}(\zeta_{m});
\]
\[\mathcal{E}(t;m_1m_2)=\mathcal{E}(t\overline{m_2};m_1)\mathcal{E}(t\overline{m_1};m_2).
\]

\item \emph{Almost-N property}: For any prime $p\in\mathcal{M}$ and $t\in\mathbb{Z}$ with $(p,t)=1$, $\mathcal{E}(t;p)\neq0$.

\item \emph{Almost-I property}: For any $p\in\mathcal{M}$ with $p>B$ and integers $t_1,t_2$ with $(p,t_1t_2)=1$, if 
\[
\mathcal{E}(t_1;p)/\mathcal{E}(t_2;p)\in \mathbb{Q}
\]
then $(\overline{t_1}t_2)^N\equiv1\bmod p$. 
\end{enumerate}

 \end{definition}

As a direct generalization of Theorem~\ref{thm:Kloostermanian}, we have the following theorem.

\begin{theorem}\label{thm:almost-Kloostermanian}
Let $\mathcal{E}(1;m)$ be a almost-Kloostermanian function with constant pair $(B;N)$ and good prime set $\mathcal{M}$. Then for any sequence of nonzero complex numbers $(\eta_i)^{\infty}_{i=1}$ and complex valued multiplicative function $f\colon\mathbb{N} \rightarrow \mathbb{C}$, we have
\[
     \ab|\{m\leqslant X:\mathcal{E}(1;m)=\eta_k f(m),m\in\Pi_k\cap\mathbb{N}(\mathcal{M})\}|\leqslant \pi\Big(\frac{X}{L_k};\mathcal{M}\Big)+5\ab(B+\sqrt{Nk})X^{1-\frac{1}{Nk}}
\]
and 
\begin{align*}
 &\ab|\{m\leqslant X:m\in\mathbb{N}(\mathcal{M}), \mathcal{E}(1;m)=\eta_i f(m)\text{ if } \omega(m)=i\}|\\
 &\leqslant \pi\ab(X;\mathcal{M})+(4BN^2+\beta)Xe^{-\frac{2\sqrt{\log X}}{N}}
\end{align*}
for any integer $k\geqslant2$ and real number $X\geqslant1$.

In particular, if $\mathcal{E}(1;p)=\eta_1f(p)$ holds for all but finite $p\in\mathcal{M}$, then we further have
\[
     \ab|\{m\leqslant X:\mathcal{E}(1;m)=\eta_k f(m),m\in\Pi_k\cap\mathbb{N}_s(\mathcal{M})\}|\leqslant \ab(B+Nk)X^{1-\frac{1}{k+1}}+\binom{p_f}{k}
\]
Here, $p_f$ is the maximal $p\in\mathcal{M}$ with $\mathcal{E}(1;p)\neq\eta_1f(p)$.
\end{theorem}
\proof
The proofs of these statements is same as the proof of Theorem~\ref{thm:Kloostermanian}. All we need to do is replace $\mathcal{P}$ and $\mathbb{N}_s$ with $\mathcal{M}$ and $\mathbb{N}(\mathcal{M})$ respectively in the proof of Theorem~\ref{thm:Kloostermanian}.
\endproof

Similarly, as an example, we study the Birch sum and the Sali\'e sum and present the following two propositions.
\begin{proposition}\label{thm:Birch sum good}
The Birch sum $B(a,b;m)$ is a almost-Kloostermanian function in $m\in\mathbb{N}$ if and only if $ab\neq0$. 

In particular, if $ab\neq0$, then $(\ab|ab|;2)$ is a constant pair of $B(a,b;m)$, and the a positive density subset $\mathcal{M} $ of $\mathcal{P}$ is a good prime set if and only if it is contained in the following set:
\[
\mathcal{P}\backslash\{p\in\mathcal{P}: p=3 \text{ if }(3,a+b)=1;\,p|a\text{ and }(p,b)=1;\, p\equiv2\bmod{3}\text{ with } p|b,(p,a)=1\}. 
\]
\end{proposition}
\proof
If $a=0$, then $B(a,b;m)$ is a multiplicative function and not a almost-Kloostermanian function.

If $a\neq0=b$, then from the congruence property \eqref{congruence1}, we know that the set of primes $p$ with $B(a,b;p)\neq0$ is 
\[
\{p\in\mathcal{P}: p\equiv1\bmod{3}\text{ or }p|a\}. 
\]
In this case, we also know that for any prime $p$ with $p\equiv1\bmod{3}$ and $p>\ab|a|$, and any integer $t$ with $t^{\frac{p-1}{3}}\equiv1\bmod{p}$, $B(a,b;p)/B(at,b;p)=1$. Therefore, B(a,b;m) does not possess Almost-I property.

Suppose that $ab\neq0$. It is easy to check that the set of primes $p$ with $B(a,b;p)\neq0$ is 
\[
\mathcal{P}\backslash\{p\in\mathcal{P}: p=3 \text{ if }(3,a+b)=1;\,p|a\text{ and }(p,b)=1;\, p\equiv2\bmod{3}\text{ with } p|b,(p,a)=1\}. 
\]
From the proof of Theorem~\ref{thm:Birch sum} we know that for any prime $p>\ab|ab|$ and integer $t$ with $(p,t)=1$, if $B(a,b;p)/B(at,bt;p)\in\mathbb{Q}$
then $t^2\equiv1\bmod{p}$. Hence the proof is finished.
\endproof

\begin{proposition}\label{pro:Salie sum good}
The Sali\'e sum: 
\begin{align*}
\widetilde{S}(a,b;m)=\sum_{\substack{x \bmod{m}\\x\overline{x}\equiv1\bmod {m}}}\left(\frac{x}{m}\right)e\Big(\frac{ax+b\overline{x}}{m}\Big)
\end{align*}
is a almost-Kloostermanian function in $m$ if and only if $ab\neq0$. 
And if $ab\neq0$, then $(\ab|ab|,2)$ is a constant pair of $\widetilde{S}(a,b;m)$, and
a positive density subset $\mathcal{M} $ of $\mathcal{P}$ is a good prime set if and only if it is contained in the following set:
\[
\{p\in\mathcal{P}: \left(\frac{ab}{p} \right)=1\text{ or }0\}. 
\]
In particular, $\widetilde{S}(a,b;m)$ is a Kloostermanian function if and only if $ab=d^2$ for some integer $d\in\mathbb{N}$. 
\end{proposition}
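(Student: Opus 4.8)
The plan is to obtain the proposition from the transfer results Theorem~\ref{thm:kloostermanian} and Theorem~\ref{thm:almost-kloostermaian}: once we determine for which pairs $(a,b)$, and which prime sets $\mathcal{M}$, the Sali\'e sum $\widetilde{S}(a,b;c)$ is (almost-)kloostermanian, all of Theorems~\ref{thm:distinctness},~\ref{thm:square-free},~\ref{thm:repal} follow automatically. The one nontrivial input is the classical evaluation of $\widetilde{S}$ at an odd prime: for $p$ odd with $(mn,p)=1$,
\[
\widetilde{S}(m,n;p)=\left(\frac{n}{p}\right)g_p\sum_{y^2\equiv mn\bmod p}e\!\left(\frac{2y}{p}\right),
\]
where $g_p$ is the quadratic Gauss sum ($g_p\neq0$, $g_p^2=\pm p$) and the right-hand sum is empty — hence $0$ — exactly when $\left(\frac{mn}{p}\right)=-1$; moreover $\widetilde{S}(m,0;p)=\left(\frac{m}{p}\right)g_p$ and $\widetilde{S}(0,n;p)=\left(\frac{n}{p}\right)g_p$ are nonzero for $(mn,p)=1$, while $\widetilde{S}(0,0;p)=\sum_{x\in\mathbb{F}_p^{\times}}\left(\frac{x}{p}\right)=0$ for $p$ odd.

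I would first dispose of the easy properties. The T-/almost-T-multiplicative property holds for every $(a,b)$: invariance of $\widetilde{S}(at,bt;n)$ under $t\mapsto t+n$ is immediate; $\widetilde{S}(at,bt;n)\in\mathbb{Q}(\zeta_n)$ because each summand is a value $\pm1$ of the Jacobi symbol times a root of unity in $\mathbb{Q}(\zeta_n)$; and $\widetilde{S}(a,b;mn)=\widetilde{S}(a\overline{n},b\overline{n};m)\widetilde{S}(a\overline{m},b\overline{m};n)$ for coprime $m,n$ comes from the Chinese Remainder Theorem just as for $S(a,b;c)$ in Lemma~\ref{lm:twistmuliplicative}, the point being the multiplicativity $\left(\frac{x}{mn}\right)=\left(\frac{x}{m}\right)\left(\frac{x}{n}\right)$ of the Jacobi symbol. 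For non-vanishing, this twisted multiplicativity reduces $\widetilde{S}(at,bt;n)$ (for squarefree $n$ coprime to $t$) to a product of nonzero Gauss sums and local factors $\widetilde{S}(at',bt';p)$, $p\mid n$, $(t',p)=1$. By the evaluation, for $p\nmid ab$ such a factor is nonzero iff $\left(\frac{ab}{p}\right)=1$: when $ab$ is a quadratic residue the factor equals a nonzero Gauss-sum multiple of $2\cos(4\pi t'z_0/p)$ with $z_0^2\equiv ab\bmod p$, and this cosine never vanishes for odd $p$, since that would force the even integer $8t'z_0$ to be an odd multiple of $p$. For $p\mid ab$ the auxiliary formulas give a nonzero Gauss-sum multiple when $p$ divides exactly one of $a,b$ and $0$ when $p\mid\gcd(a,b)$ (such odd primes, together with $p=2$, are treated by hand and excluded from good prime sets). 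Hence, modulo the finitely many primes dividing $ab$, non-vanishing over $\mathbb{N}(\mathcal{M})$ holds iff $\mathcal{M}\subseteq\{p:\left(\frac{ab}{p}\right)\in\{1,0\}\}$.

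The crux is the irrationality property, a Sali\'e analogue of Lemma~\ref{lm:irational}. Fix $p\nmid ab$ and $t$ with $(p,t)=1$; since $\left(\frac{abt^2}{p}\right)=\left(\frac{ab}{p}\right)$, the sums $\widetilde{S}(a,b;p)$ and $\widetilde{S}(at,bt;p)$ vanish together, so we may assume $\left(\frac{ab}{p}\right)=1$ with $z_0^2\equiv ab\bmod p$. Putting $w\coloneqq\zeta_p^{2z_0}$, a primitive $p$-th root of unity, the evaluation yields
\[
\frac{\widetilde{S}(a,b;p)}{\widetilde{S}(at,bt;p)}=\left(\frac{t}{p}\right)\cdot\frac{w+w^{-1}}{w^{t}+w^{-t}}.
\]
Now $\alpha\coloneqq w+w^{-1}$ is a generator of the real cyclotomic field $\mathbb{Q}(\zeta_p)^{+}$ over $\mathbb{Q}$, and $w^{t}+w^{-t}=\sigma_t(\alpha)$ for the automorphism $\sigma_t\colon\zeta_p\mapsto\zeta_p^{t}$. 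If the ratio lies in $\mathbb{Q}$, then $\alpha=r\,\sigma_t(\alpha)$ for some $r\in\mathbb{Q}$; summing over $\sigma\in\Gal(\mathbb{Q}(\zeta_p)^{+}/\mathbb{Q})$ and using $\mathrm{Tr}_{\mathbb{Q}(\zeta_p)^{+}/\mathbb{Q}}(\alpha)=-1\neq0$ (which follows from $\mathrm{Tr}_{\mathbb{Q}(\zeta_p)/\mathbb{Q}}(w)=-1$) forces $r=1$, hence $\sigma_t=\mathrm{id}$ and $t\equiv\pm1\bmod p$. This verifies the irrationality property at every prime not dividing $ab$, in particular every $p>|a|+|b|$. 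This step is the main obstacle: one cannot simply copy the Kloosterman proof through $\kl(au^2,p)$ and Lemma~\ref{lm:distinct}, and the workable substitute is the explicit evaluation together with the fact that $w+w^{-1}$ is a normal generator of $\mathbb{Q}(\zeta_p)^{+}$ of nonzero trace.

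Finally I would assemble the three pieces. Combining them, a positive-density $\mathcal{M}\subseteq\mathcal{P}$ is a good prime set for $\widetilde{S}(a,b;c)$ if and only if $\mathcal{M}\subseteq\{p:\left(\frac{ab}{p}\right)\in\{1,0\}\}$, so $\widetilde{S}(a,b;c)$ is almost-kloostermanian iff that set has positive density. For $ab\neq0$ this holds, since $\{p:\left(\frac{ab}{p}\right)=1\}$ has density $\tfrac{1}{2}$ (density $1$ when $ab$ is a perfect square) by quadratic reciprocity. For $ab=0$ it fails: the irrationality property breaks at every large prime because $\widetilde{S}(a,0;p)/\widetilde{S}(at,0;p)=\left(\frac{t}{p}\right)\in\mathbb{Q}$ for all units $t$ — in fact $\widetilde{S}(1,0;n)=\varepsilon_n\sqrt{n}$ is an honest multiplicative function, so even the conclusions of Theorems~\ref{thm:distinctness}--\ref{thm:repal} are false. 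Next, $\widetilde{S}(a,b;c)$ is kloostermanian iff one may take $\mathcal{M}=\mathcal{P}$, i.e. iff $\left(\frac{ab}{p}\right)\neq-1$ for every prime $p$, which by reciprocity is equivalent to $ab$ being a nonzero perfect square $m^2$; in that case Theorem~\ref{thm:kloostermanian} transports all of Theorems~\ref{thm:distinctness},~\ref{thm:square-free},~\ref{thm:repal} to $\widetilde{S}(a,b;c)$, and conversely, if $\left(\frac{ab}{p_0}\right)=-1$ for some $p_0$, then $\widetilde{S}(a,b;p_0)=0$ and those conclusions fail by the construction in Remark~\ref{rmk:n to p}. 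Apart from the irrationality step, the only care needed is the exact non-vanishing of $2\cos(4\pi tz_0/p)$ in Step~2 and the separate treatment of the primes dividing $ab$ and of $p=2$.
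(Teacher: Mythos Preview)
Your proposal is correct and follows essentially the same route as the paper: verify the T-multiplicative property directly, invoke Sali\'e's classical evaluation to determine non-vanishing (precisely at primes with $\left(\frac{ab}{p}\right)\in\{0,1\}$), deduce the irrationality property from the fact that $2\cos(4\pi z_0/p)=w+w^{-1}$ generates $\mathbb{Q}(\zeta_p)^{+}$, and then assemble via Theorems~\ref{thm:kloostermanian} and~\ref{thm:almost-kloostermaian} together with Remark~\ref{rmk:n to p} for the converse. Your trace argument for the irrationality step makes explicit what the paper leaves as a one-line assertion, and you are slightly more careful than the paper about the finitely many primes dividing $\gcd(a,b)$ and about $p=2$; your use of quadratic reciprocity in place of the paper's appeal to Chebotarev is an inessential variation.
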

\proof
By the definition, we know that $\widetilde{S}(a,b;m)$ possesses T-multiplicative property for all integers $a,b$. 
If $ab=0$, then $\widetilde{S}(a,b;m)$ is a multiplicative function of $m$ or a quadratic Gauss sum, evidently it not a almost-Kloostermanian function.

Assuming $ab\neq0$, by the classical theorem of Sali\'e \cite[Chapter 4, Lemma 4.4]{Sarnak90}, we get
\begin{align*}
\widetilde{S}(a,b;p)=0\text{ if }\ab(\frac{ab}{p})=-1;\,\,\widetilde{S}(a,b;p)\neq0\text{ if }\ab(\frac{ab}{p})=0.
\end{align*}
And
\begin{align*}
\widetilde{S}(a,b;p)=2\cos(\frac{4\pi x}{p})\sum_{y\bmod{p}}e(\frac{ay^2}{p}) \text{ and } x^2\equiv ab\bmod{p}\text{ if }\ab(\frac{ab}{p})=1.
\end{align*}

Hence, for any prime $p$ and integer $t$ with $(p,t)=1$, $\widetilde{S}(at,bt;p)\neq0$ if and only if $\left(ab/p\right)=1\text{ or }0$.
Furthermore, if $\left(ab/p\right)=1$ and $\widetilde{S}(a,b;p)/\widetilde{S}(at,bt;p)\in\mathbb{Q}$, then
\begin{align*}
\frac{\cos(\frac{4\pi x}{p})}{\cos(\frac{4\pi tx}{p})}\ab(\frac{t}{p})\in\mathbb{Q}\text{ and }x^2\equiv ab\bmod{p}.
\end{align*}
Therefore, we obtain $t^2\equiv1\bmod{p}$. This completes the proof of the first two statements.

By the first two statements and the Chebotarev density theorem, we know that $\widetilde{S}(a,b;m)$ is a Kloostermanian function of $m\in\mathbb{N}$ if and only if $ab=d^2$ for some $d\in\mathbb{N}$. 
Hence the proof is finished.
\endproof

\bibliographystyle{alpha}
\bibliography{ref}

\bigskip

\end{document}